\newtheorem{prop}{Proposition}[section]
\newtheorem{lemma}[prop]{Lemma}
\newtheorem{theorem}[prop]{Theorem}
\newtheorem{conj}[prop]{Conjecture}
\newtheorem*{lemma*}{Lemma}
\newtheorem*{theorem*}{Theorem}
\newcommand{\eps}{\epsilon}
\newcommand{\ZZ}{\mathbb{Z}}
\newcommand{\RR}{\mathbb{R}}
\newcommand{\TT}{\mathbb{T}}
\title{Incidence estimates for well spaced tubes}
\author{Larry Guth, Noam Solomon, and Hong Wang}
\begin{document}

\maketitle

\begin {abstract} We prove analogues of the Szemer\'edi-Trotter theorem and other incidence theorems using $\delta$-tubes in place of straight lines, assuming that the $\delta$-tubes are well-spaced in a strong sense.
\end {abstract}

\section{Introduction}

In a series of papers in the late 90s, Tom Wolff explored the connection between incidence geometry and Kakeya-type problems in harmonic analysis.  By adapting techniques from the incidence geometry literature, he was able to prove some striking results in harmonic analysis (cf. \cite{W96}, \cite{W97}, and \cite{W00}).  Incidence geometry is about the intersection patterns of lines, and the Kakeya problem is about the intersection patterns of thin tubes, and so it sounds reasonable that they should be related.  It turns out, however, that it is quite subtle to adapt theorems from the setting of lines to the setting of thin tubes, and a lot remains unknown.  In order to get non-trivial estimates in the setting of tubes, it is necessary to add some assumption about how the tubes are spaced.  There are many possible assumptions, and so there are many different problems to consider.  In this paper, we consider very strong spacing assumptions on the tubes, and under those assumptions we prove nearly sharp incidence estimates.

Our first main result is an analogue of the Szemer\'edi-Trotter theorem from incidence geomery.  We first recall the theorem.  Suppose that $\frak L$ is a set of lines in the plane.  For $r \ge 2$, let $P_r(\frak L)$ denote the $r$-rich points of $\frak L$ - the set of points that lie in at least $r$ lines of $\frak L$.  The Szemer\'edi-Trotter theorem gives sharp bounds for $|P_r(\frak L)|$:

$$ |P_r(\frak L)| \lesssim r^{-3} |\frak L|^2 + r^{-1} |\frak L|. \eqno{(ST)}$$

Now suppose that $\TT$ is a set of $\delta \times 1$ tubes (i.e. rectangles) in $[0,1]^2$.  The set of all $\delta$-balls that intersect at least $r$ tubes of $\TT$ is infinite, and so we define $P_r(\TT)$ to be the set of $\delta$-balls that have centers in the lattice $\delta \ZZ^2$ and intersect at least $r$ tubes of $\TT$.  The bound (ST) does not hold for tubes in this generality.  We begin with a few simple examples to show that some spacing conditions are necessary.  First of all, if all the tubes of $\TT$ are tiny perturbations of a fixed tube, with the size of the perturbation less than $\delta$, then we can get $\delta^{-1}$ $r$-rich $\delta$--balls for $r \sim |\TT|$.  We say that two $\delta$-tubes, $T_1$ and $T_2$, are essentially distinct if

$$ |T_1 \cap T_2| \le (1/2) |T_1|. $$

\noindent From now on we assume that the tubes of $\TT$ are essentially distinct.  But the bound (ST) does not hold for essentially distinct tubes either.  Let $R$ be an $r \delta \times 1$ rectangle.  There are $\sim r^2$ essentially distinct $\delta$-tubes in $R$, and we let $\TT_R$ denote such a set of $\delta$-tubes.  Then $P_r(\TT_R) \sim r \delta^{-1}$, which is often much bigger than (ST).  In the context of the Kakeya problem, one sometimes considers tubes that point in distinct directions. For instance, suppose that $\TT$ consists of $\delta^{-1}$ $\delta$-tubes, all going through the origin, and pointing in $\delta$-separated directions.  In this case, it's not hard to check that $|P_r(\TT)| \sim r^{-2} | \TT|^2$, which is still bigger than the (ST) bound for all $1 \ll r \ll \delta^{-1}$.  

To get an analogue of (ST) for tubes, we need to make a stronger hypothesis about how the tubes are spaced.  We will consider the following hypothesis, which is the strongest spacing condition that we can make.  Fix some $W \ge 1$.  There are $\sim W^2$ essentially distinct $W^{-1} \times 1$ rectangles in $[0,1]^2$.  Then fix some $\delta < 1/W$ and let $\TT$ be a set of $W^2$ $\delta$-tubes, one contained in each of these $W^{-1} \times 1$ rectangles.  Even under this spacing condition (ST) does not always hold.  The reason is that an average $\delta$-ball in $[0,1]^2$ is $r$-rich for $r \sim \delta | \TT|$.  If $ r \le \delta | \TT|$, then for a typical choice of $\TT$, we have $|P_r(\TT)| \sim \delta^{-2}$, which often violates (ST).  Our first theorem says that if $\TT$ is well-spaced in this sense, and if $r$ is bigger than the threshold $\delta | \TT|$, then the (ST) bound holds up to small errors.

\begin{theorem} \label{stwellspaced} Suppose that $1 \le W \le \delta^{-1}$.  Suppose that $\TT$ is a set of $\sim W^{-2}$ $\delta$-tubes in $[0,1]^2$ with $\lesssim 1$ $\delta$-tube of $\TT$ in each $W^{-1} \times 1$ rectangle.
	
$$\textrm{ If } r > \max(\delta^{1-\eps} | \TT|, 1), $$

$$\textrm{then } |P_r(\TT)| \lesssim_\eps \delta^{-\eps} r^{-3} |\TT|^2. $$
	
\end{theorem}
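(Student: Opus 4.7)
The plan is to combine a pair incidence estimate furnished by the strong well-spacing with an induction-on-scales argument that bridges the gap between the pair bound and the target. The first step is standard dyadic pigeonholing: pass to subsets $\TT' \subseteq \TT$ and $P' \subseteq P_r(\TT)$ on which every tube of $\TT'$ contains $\sim \mu$ points of $P'$ and every point of $P'$ lies in $\sim r$ tubes of $\TT'$. This yields the double-counting identity $|P'|\,r \sim |\TT'|\,\mu$ at only a polylogarithmic cost, which can be absorbed into the $\delta^{-\eps}$ budget, and reduces the task to bounding $\mu$.

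The geometric input from well-spacing is the following pair bound: for any two $\delta$-balls with centers at distance $d$, the number of tubes in $\TT$ intersecting both is $\lesssim \max(1, \delta W / d)$. Indeed, any such tube has direction within an arc of length $\sim \delta / d$ around the chord joining the two centers, which contains $\sim \max(1, \delta W / d)$ of the $\sim W$ available directions in our $W^{-1}$-separated direction set; and for each admissible direction the $W^{-1}$-spacing of positions allows at most one tube within $\delta$ of either center. Counting triples $(T, x, y)$ with $x, y \in T \cap P'$ in two ways then yields
$$ \mu^2 |\TT'| \;\lesssim\; \sum_{x \ne y \in P'} \max(1, \delta W / |x-y|) \;\lesssim\; |P'|^2 + W |P'|^{3/2}, $$
where the second estimate comes from a dyadic decomposition of pairs by distance, exploiting that $P'$ is $\delta$-separated so each dyadic annulus contains $\lesssim (2^{-k}/\delta)^2$ points. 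In the regime where the second term dominates this gives, after substituting $|P'| = |\TT'| \mu / r$, the bound $|P_r(\TT)| \lesssim W^2 |\TT|^2 / r^4 \sim |\TT|^3 / r^4$, which misses the target $|\TT|^2 / r^3$ by a factor of $|\TT|/r$.

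To close this gap I would iterate across scales. Pick an intermediate scale $\rho \in (\delta, 1)$ and fatten each $\delta$-tube to a $\rho$-tube; the fattened family is still well-spaced. Any $r$-rich $\delta$-ball lies inside some $\rho$-ball that intersects $\geq r$ of the fattened tubes, so the inductive bound at scale $\rho$ controls the count of coarse rich balls. Within each such coarse ball, rescaling $[0,1]^2$ presents a well-spaced family of $\sim \rho |\TT|$ tubes of thickness $\delta / \rho$ to which the hypothesis again applies, and combining the two levels produces the desired bound after $O(\log \delta^{-1})$ iterations, with total loss $\delta^{-\eps}$.

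The main technical obstacle is maintaining the threshold $r > \delta^{1-\eps} |\TT|$ through the rescaling: at scale $\rho$ the rescaled threshold becomes $r > (\delta/\rho)^{1-\eps} (\rho |\TT|) = \delta^{1-\eps} \rho^{\eps} |\TT|$, which is slightly more restrictive and eats into the $\eps$-budget. Choosing $\rho$ at each level so that the pair bound and the inductive bound balance while the threshold survives, and distributing the $\eps$-slack correctly across the $O(\log \delta^{-1})$ levels of induction, is where the argument is most delicate.
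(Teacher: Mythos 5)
Your opening pair estimate is a genuinely different route from the paper: where the paper begins with a Fourier-analytic ``heavy balls'' dichotomy (Proposition~\ref{heavyballs}) plus a cell decomposition, you begin with a second-moment count exploiting the pair bound $\#\{T : x,y \in T\} \lesssim \max(1,\delta W/|x-y|)$, and your bookkeeping leading to $\mu^2|\TT'| \lesssim |P'|^2 + W|P'|^{3/2}$ and hence $|P_r(\TT)| \lesssim W^2|\TT|^2/r^4 \sim |\TT|^3/r^4$ looks correct. You also correctly identify the deficit: this overshoots the target by $|\TT|/r$, which can be as large as $\delta^{-1+\eps}$ near the threshold, so some multiscale argument is mandatory.

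The gap is in the multiscale step, and it is not the small $\rho^{\eps}$ loss you flag at the end. The serious obstruction is in the \emph{coarse} step. If you fatten the $\delta$-tubes to $\rho$-tubes to form $\tilde\TT$ (one fattened tube per original tube, so $|\tilde\TT|=|\TT|$) and invoke the theorem at scale $\rho$ to count $r$-rich $\rho$-balls, the hypothesis you need is $r > \rho^{1-\eps}|\tilde\TT| = \rho^{1-\eps}|\TT|$. Since $\rho \gg \delta$, this is a strictly \emph{stronger} demand than the given $r > \delta^{1-\eps}|\TT|$, and it fails throughout the range $\delta^{1-\eps}|\TT| < r < \rho^{1-\eps}|\TT|$ (for $\rho=\delta^{1/2}$ this is already most of the admissible range of $r$). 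So the inductive hypothesis simply cannot be applied at the coarser scale, and using the raw pair bound at the coarser scale instead does not help: combining the coarse pair bound $|\TT|^3/r^4$ with the rescaled-ball pair bound gives $\rho^3|\TT|^6/r^8$, which you can check is no better than the target at the threshold $r\sim\delta|\TT|$ unless $\rho\lesssim\delta$. This is precisely the problem the paper's heavy-ball proposition is designed to solve: in the thick case the richness of the coarsened balls jumps up by the factor $S^{n-1}$, and after pigeonholing on how many $\delta$-tubes sit inside each coarsened tube (the parameter $N$) the \emph{ratio} $\tilde r / |\tilde\TT|$ is enhanced by $S$, which is exactly what lets the threshold survive the fattening. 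Without some input that forces the rich $\delta$-balls to cluster (which your second-moment inequality, as stated, does not provide -- it is a single global bound, not a dichotomy), there is no mechanism to recover the lost factor.

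A secondary issue worth noting: when you rescale the interior of a $\rho$-ball $Q$ isotropically by $1/\rho$, the tubes through $Q$ do \emph{not} come out well-spaced at a single scale $\tilde W^{-1}$. The original spacing is $W^{-1}$ in direction and $W^{-1}$ in position; after isotropic rescaling the directional spacing is still $W^{-1}$ but the positional spacing becomes $(W\rho)^{-1}$. That anisotropy means the rescaled family does not satisfy the hypothesis of Theorem~\ref{stwellspaced} in the stated form. The paper avoids this by rescaling anisotropic cells $\Box_\tau$ of dimensions $D^{-1}\times 1$ (a tube, not a ball), which restores the $W/D$-spacing in both position and direction simultaneously. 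If you want to make your scheme work, you would likely need to replace the $\rho$-ball decomposition with a decomposition into anisotropic cells in the same spirit.
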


Another variation of our argument estimates the incidences for a set of tubes with many well-spaced tubes in every direction.

\begin{theorem} \label{stw} Let $1 \le W \le \delta^{-1}$.  Divide the circle into arcs $\theta$ of length $\delta$.  For each $\theta$, and each $1 \le j \le W$, let $T_{\theta,j} \subset [0,1]^2$ be a $\delta$-tube.  Suppose that for each $\theta$, and each $W^{-1} \times 1$ rectangle in direction $\theta$, there are $\sim N_1$ tubes $T_{\theta, j}$ in the rectangle.  Let $\TT$ be the set of all the tubes $T_{\theta, j}$.   Then for any $\eps>0$ 
	
	$$\textrm{ if } r \ge C_1(\eps) \delta^{1 - \eps} | \TT|, $$
	
	$$\textrm{ then } |P_r(\TT)| \le C_2(\eps) \delta^{-\eps} W^{-1} r^{-2} | \TT|^2. $$
	
\end{theorem}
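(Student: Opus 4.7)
The plan is to reduce Theorem~\ref{stw} to Theorem~\ref{stwellspaced} by a random subsampling of the tubes, combined with a broad/narrow analysis at the scale of rough directions. The extra structural hypothesis in Theorem~\ref{stw}, namely well-spaced tubes in every direction, provides many more tubes than Theorem~\ref{stwellspaced}'s $W^2$, so the role of the subsampling is to thin $\TT$ down to a Theorem~\ref{stwellspaced} instance.

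Partition the circle of directions into $W$ rough arcs $\nu$ of length $W^{-1}$. For each rough arc $\nu$, the tubes of $\TT$ with direction in $\nu$ form a subcollection $\TT_\nu$ of size $\sim \delta^{-1}$, organized into $W$ tracks (the $W^{-1}\times 1$ rectangles in direction $\nu$), each containing $\sim (W\delta)^{-1}$ tubes. Select a random subset $\TT_{\mathrm{sel}}\subset\TT$ by drawing one tube uniformly from each of the $W^2$ (rough direction, track) pairs. The resulting set has cardinality $\sim W^2$ and, up to a constant factor from overlap between adjacent rough directions, contains at most $O(1)$ tubes in any $W^{-1}\times 1$ rectangle, which is the hypothesis of Theorem~\ref{stwellspaced}.

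Each tube is selected with probability $W\delta$, so an $r$-rich ball $B\in P_r(\TT)$ acquires expected sampled richness $rW\delta$. Applying Theorem~\ref{stwellspaced} at scale $r'\sim rW\delta$, and using Chernoff-type concentration to translate expected into actual sampled richness (valid when $rW\delta\gg\log\delta^{-1}$), gives
\[
|P_r(\TT)|\lesssim \delta^{-\eps}(rW\delta)^{-3}(W^2)^2 = \delta^{-3-\eps}Wr^{-3}.
\]
The hypothesis on $r$ guarantees $r'>\delta^{1-\eps}(W^2)$, so that Theorem~\ref{stwellspaced} applies. This sampling bound matches the target $\delta^{-2-\eps}Wr^{-2}$ at the extreme richness $r\sim\delta^{-1}$, but is weaker by a factor of $(r\delta)^{-1}$ for moderate $r$.

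To close this gap, one combines the sampling bound with a broad/narrow decomposition at the rough-direction scale. A \emph{narrow} ball (most of whose tubes concentrate in one rough direction $\nu^*$) is $\sim r$-rich inside a single track, where only $\sim(W\delta)^{-1}$ tubes are present, and a direct Cauchy--Schwarz estimate gives a small contribution once one sums over the $W^2$ tracks. A \emph{broad} ball (tubes spread across many rough directions) admits a bilinear estimate $\sum_B r_{\nu_1}(B)r_{\nu_2}(B)\lesssim \delta^{-2}$ per pair of rough directions, and simultaneously enjoys improved concentration under the sampling. The main obstacle is balancing the broad/narrow threshold with the sampling concentration in the regime where $W$ is close to $\delta^{-1}$; this likely requires an iterated dyadic argument over a secondary scale parameter, with the $\delta^{-\eps}$ loss in the final bound absorbed at this step.
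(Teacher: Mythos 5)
The proposal has a genuine gap; it is not a proof. You correctly compute that random subsampling plus Theorem~\ref{stwellspaced} gives only $|P_r(\TT)|\lesssim \delta^{-3-\eps}Wr^{-3}$, which exceeds the target $\delta^{-2-\eps}Wr^{-2}$ by a factor of $(r\delta)^{-1}$ for all $r\ll\delta^{-1}$. The broad/narrow argument that is supposed to recover this factor is not carried out: the narrow case estimate (``a direct Cauchy--Schwarz estimate gives a small contribution once one sums over the $W^2$ tracks'') is asserted without calculation and is not routine, since a single track of width $W^{-1}$ can contain up to $(W\delta)^{-1}$ tubes through a fixed $\delta$-ball, and summing their pairwise intersections over all $W^2$ tracks does not obviously beat the target; and the broad case is described only as ``admits a bilinear estimate'' and ``enjoys improved concentration'' with the closing step deferred to an unspecified ``iterated dyadic argument.''

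There is also a problem with the Chernoff step itself. You need the sampled richness $rW\delta$ to exceed a logarithmic threshold for concentration to apply, but the hypothesis on $r$ only gives $r\ge C_1(\eps)W\delta^{-\eps}$, hence $rW\delta\ge C_1(\eps)W^2\delta^{1-\eps}$. For $W\lesssim \delta^{-1/2+\eps/2}$ this quantity can be $\lesssim 1$, in which case Chernoff provides no concentration and an $r$-rich ball of $\TT$ need not survive the subsampling at all. This is precisely the range of $W$ where the bound is non-trivial, so it cannot be dismissed.

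For comparison, the paper does not reduce Theorem~\ref{stw} to Theorem~\ref{stwellspaced}; it proves the two theorems by parallel inductions using the same two ingredients: the Fourier-analytic ``heavy balls'' dichotomy (Proposition~\ref{heavyballs}), which shows that rich balls either obey an $L^2$ Kakeya-type bound or cluster into heavier balls at a slightly larger scale, and a partition of $\TT$ into $D^2$ tracks $\TT_R$ followed by rescaling each $R$ to a unit square. The base case is the planar Kakeya maximal estimate (Lemma~\ref{kakmax}) applied at the rescaled track scale, not Theorem~\ref{stwellspaced}. If you want to salvage the subsampling idea, the right move would be to replace the passage through Theorem~\ref{stwellspaced} with the heavy-ball dichotomy applied directly to $\TT$, which is exactly where the extra factor of $(r\delta)^{-1}$ is gained in the paper's argument.
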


\noindent This estimate is also sharp, as we will see below.  This problem came up in conversations with Ciprian Demeter about decoupling theory.  We hope to discuss the connection with decoupling problems in a later paper with him.

We were also able to push our method to three dimensions.  In \cite{GK15}, the first author and Nets Katz proved an incidence estimate for lines in $\RR^3$, which says that if $\frak L$ is a set of lines in $\RR^3$ with at most $|\frak L|^{1/2}$ lines in any plane or degree 2 algebraic surface, then

\begin{equation} \label{incr3} |P_r(\frak L)| \lesssim r^{-2} |\frak L|^{3/2} + r^{-1} |\frak L|. \end{equation}

\noindent We prove an analogue of this estimate for well-separated tubes in three dimensions.

\begin{theorem} \label{gkwellspaced}  Suppose that $1 \le W \le \delta^{-1}$.  Suppose that $\TT$ is a set of $\sim W^{-4}$ $\delta$-tubes in $[0,1]^3$ with $\lesssim 1$ $\delta$-tube of $\TT$ in any tube of radius $W^{-1}$ and length 1.

$$\textrm{ If } r > \max ( \delta^{2-\eps}  | \TT|, 1), $$

$$\textrm{then } |P_r(\TT)| \lesssim_\eps \delta^{-\eps} r^{-2} |\TT|^{3/2}. $$

\end{theorem}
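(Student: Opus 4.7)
The plan is to prove Theorem \ref{gkwellspaced} by adapting the polynomial partitioning argument of Guth--Katz \cite{GK15} for the line incidence bound (\ref{incr3}) to the multi-scale setting of $\delta$-tubes, using an induction on the scale $\delta$. Specifically, I would establish the statement with an implicit constant of the form $C_\eps \delta^{-\eps}$ by induction on $\log(1/\delta)$: assuming the bound at every larger scale (with a slowly growing $C(\eps)$), I prove it at scale $\delta$ with a controlled multiplicative loss.

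Given $\TT$ and the set $P := P_r(\TT)$ of $r$-rich $\delta$-balls, apply polynomial partitioning at a degree $D$ to be chosen later: there is a nonzero $\mathcal P \in \RR[x,y,z]$ of degree $\le D$ whose zero set $Z(\mathcal P)$ subdivides $\RR^3$ into $O(D^3)$ open cells, each meeting at most $|P|/D^3$ of the $r$-rich balls. Split into a \emph{cellular case} (a constant fraction of $P$ lies strictly inside cells) and an \emph{algebraic case} (a constant fraction of $P$ lies within $O(\delta)$ of $Z(\mathcal P)$).

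In the cellular case, B\'ezout gives that a single $\delta$-tube, treated as a curve, meets $O(D)$ cells, so the total tube/cell incidence count is $O(D|\TT|)$ and the typical cell contains $O(|\TT|/D^2)$ tubes. Within each cell $\Omega$, rescale $\Omega$ to the unit cube, producing tubes at scale $\delta/\mathrm{diam}(\Omega) \gg \delta$; crucially, rescaling preserves the spacing condition with new parameter $W' = W \cdot \mathrm{diam}(\Omega) \sim W/D$, which stays $\gtrsim 1$ as long as $D \le W$. Applying the inductive hypothesis in each cell and summing yields the target bound $r^{-2}|\TT|^{3/2}$ with only a $D^{O(\eps)}$ multiplicative loss, which closes the induction once $D$ is taken to be a small negative power of $\delta$.

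In the algebraic case, most $r$-rich balls lie in the $\delta$-neighborhood of $Z(\mathcal P)$. Here the strong well-spacing of $\TT$ replaces the Guth--Katz non-concentration hypothesis: because at most one $\delta$-tube of $\TT$ occupies each $W^{-1}$-tube, one expects that at most $O(D^{O(1)} W^2)$ $\delta$-tubes of $\TT$ can be contained in the $\delta$-neighborhood of a degree-$D$ surface, while the remaining tubes cross the neighborhood transversely and contribute $O(1)$ rich balls each. The trapped tubes are then handled by restricting to a thickened slab around $Z(\mathcal P)$, slicing into sheets of controlled thickness, and invoking the induction (after rescaling) on each sheet, while the transverse tubes give a lower-order contribution.

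The main obstacle will be the algebraic case: unlike the discrete setting of \cite{GK15}, where few lines in any plane or quadric is assumed outright, here one must derive a quantitative non-concentration statement for well-spaced $\delta$-tubes near a degree-$D$ variety directly from the $W^{-1}$-tube packing condition, using tangency/curvature considerations to control how many $\delta$-tubes can ``graze'' the variety. Keeping track of the $\delta^{-\eps}$ budget across the induction on scales, through both the cellular and algebraic alternatives, is the technically heaviest part; the hypothesis $r > \delta^{2-\eps}|\TT|$ should be exactly what is needed to balance the two cases at every scale.
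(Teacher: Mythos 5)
Your proposal uses polynomial partitioning in the style of Guth--Katz, but the paper's proof avoids polynomial partitioning entirely and uses a substantially different mechanism. The paper's argument is built from two main ingredients: (i) a Fourier-analytic dichotomy (Proposition~\ref{heavyballs}), which shows that either the rich $\delta$-balls are few (``thin case'') or most of them cluster inside larger ``heavy'' $S\delta$-balls through which $\gtrsim S^{n-1}r$ tubes pass (``thick case''); and (ii) a non-polynomial partitioning of $\TT$ by direction-and-position into $\sim D^{2(n-1)}$ thick tubes $\Box_\tau$ of radius $1/D$, which is the partitioning idea from \cite{CEGSW}. In the thin case one rescales each $\Box_\tau$ and applies the inductive hypothesis; in the thick case one thickens the tubes and applies induction at the coarser scale $S\delta$. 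There is also a separate treatment of the small-$r$ regime. Your approach is genuinely different from this.

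The main problem with your proposal is the algebraic case, and you are right to flag it as the main obstacle: it is not a technical annoyance but an essentially open problem. To close that branch you would need a quantitative bound on $\delta$-tubes concentrated in the $\delta$-neighborhood of a degree-$D$ surface, or a way to bound rich balls produced by tubes that are merely \emph{tangent} to (not contained in) the variety. There is no available analogue of the Guth--Katz ``at most $\sqrt{|\frak L|}$ lines in a plane/quadric'' hypothesis in the $\delta$-discretized setting, and the spacing condition alone does not yield it: tubes tangent to a curved surface can linger near it for unit length while occupying well-separated $W^{-1}$-tubes. This is essentially the hard part of the three-dimensional Kakeya problem, and the paper is explicitly designed to sidestep it (it notes that the natural Kakeya-maximal base case is unavailable in $n=3$, which is precisely why the spacing hypothesis in Theorem~\ref{gkwellspaced} is phrased the way it is). The cellular case also has a gap you gloss over: polynomial partition cells are not round regions of diameter $\sim 1/D$, so the claims that you can rescale each cell to the unit cube and inherit a clean spacing parameter $W' \sim W/D$ and a well-defined new scale $\delta/\mathrm{diam}(\Omega)$ do not follow; the paper's choice of \emph{tube-shaped} cells $\Box_\tau$ is made exactly so that this rescaling step goes through. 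Finally, note that the small-$r$ regime (which the paper handles as a separate case precisely because the inductive step requires $M \ge 2$) does not appear in your outline and would also need to be addressed.
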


This theorem gives a very special case of the Kakeya conjecture in $\RR^3$.  The Kakeya maximal function conjecture in $\RR^3$ says that if $\TT$ is a set of $\delta^{-2}$ $\delta$-tubes pointing in $\delta$-separated directions, then $|P_r(\TT)| \lesssim_\eps \delta^{-\eps} r^{-3/2} | \TT|^{3/2}$.  Our bound is stronger than this one, but it only applies if the tubes of $\TT$ obey our very strong spacing condition.

The incidence estimate for lines in $\RR^3$ in \cite{GK15} was motivated by the Erd{\H o}s distinct distance problem in the plane.  The problem asks for the minimal number of distinct distances determined by $N$ points in the plane.  In \cite{ES}, Elekes and Sharir proposed an interesting approach to the distinct distance problem which connects it to incidences between points and lines in three dimensions.  Combining their approach with the bound (\ref{incr3}), the paper \cite{GK15} proved that $N$ points in the plane determine $\gtrsim N / \log N$ distinct distances, which is sharp up to logarithmic factors.  Using the Elekes-Sharir framework, Theorem \ref{gkwellspaced} implies a similar distance estimate for well-spaced $\delta$-balls or points. 

\begin{theorem} \label{distdistwellspaced} If $E$ is a set of $N$ points in $[0,1]^2$ with $\lesssim 1$ point in each $N^{-1/2}$-ball, then $\Delta(E)$ contains $\gtrsim_\eps N^{1-\eps}$ distances which are pairwise separated by $N^{-1}$.
\end{theorem}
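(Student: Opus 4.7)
\emph{Setup via Elekes--Sharir.} For each ordered pair $(p,q) \in E \times E$ with $p \ne q$, the orientation-preserving rigid motions sending $p$ to $q$ form a one-parameter family; writing such a motion as rotation by angle $\theta$ about a center $c \in \mathbb{R}^2$ and substituting $u = \cot(\theta/2)$, one computes (identifying $\mathbb{R}^2 \cong \mathbb{C}$) that
$$ c = m + u v, \qquad m = \tfrac{p+q}{2}, \quad v = \tfrac{i(q-p)}{2}, $$
so $\ell_{p,q} := \{(c,u) \in \mathbb{R}^3 : c = m + uv\}$ is an honest line. Two such lines share a point $(c_0,u_0)$ exactly when the rigid motion parametrized by $(c_0,u_0)$ sends $(p,p')$ to $(q,q')$, forcing $|p-p'| = |q-q'|$. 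Set $\delta = N^{-1}$ and $W = N^{1/2}$, and let $\TT$ consist of the $\delta$-tubes $T_{p,q}$ around these lines, restricted to, say, $|u| \le 1$; then $|\TT| \sim N^2 = W^4$, matching the normalization of Theorem~\ref{gkwellspaced}.

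\emph{Well-spacing and incidence bound.} A $\delta$-tube $T_{p,q}$ lies in a $W^{-1}$-tube only when both its $m$-value (its position at $u=0$) and its $v$-value (its slope in the $c$-directions) match those of the tube's axis to within $O(W^{-1})$. The parallelogram identity gives
$$ |m-m'|^2 + |v-v'|^2 = \tfrac{1}{2}\bigl(|p-p'|^2 + |q-q'|^2\bigr), $$
so for distinct ordered pairs $(p,q) \ne (p',q')$, the $N^{-1/2}$-separation of $E$ forces $\max(|m-m'|,|v-v'|) \gtrsim N^{-1/2} = W^{-1}$ (the swap case $(p',q') = (q,p)$ gives $v-v' = i(q-p)$, which has magnitude $\ge N^{-1/2}$ as well). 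Hence $\lesssim 1$ tube of $\TT$ lies in any $W^{-1}$-tube, and Theorem~\ref{gkwellspaced} yields $|P_r(\TT)| \lesssim_\eps N^{3+\eps} r^{-2}$ for $r \ge N^\eps$.

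\emph{Counting quadruples and concluding.} Let $Q$ be the number of quadruples $(a,b,c,d) \in E^4$ with $\big||a-b| - |c-d|\big| \lesssim \delta$. Each such quadruple is witnessed by the unique orientation-preserving rigid motion sending $(a,b)$ to $(c,d)$, which lies in $T_{a,c} \cap T_{b,d}$; thus $Q \lesssim \sum_g r(g)^2$, where the sum is over $\delta$-balls $g$ and $r(g)$ is the number of tubes of $\TT$ through $g$ (the overcounting is $O(1)$ since orientation-preserving rigid motions act freely on non-degenerate ordered pairs of distinct points, and $\delta \ll N^{-1/2}$ so at most one $c \in E$ is within $\delta$ of $g(a)$). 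A dyadic sum, using the trivial bound $|P_r(\TT)| \le \delta^{-3} = N^3$ for $r \le N^\eps$ and the theorem for $r > N^\eps$, then gives $Q \lesssim_\eps N^{3 + O(\eps)}$. Partitioning $[0, \sqrt{2}]$ into $\delta$-intervals with occupancies $n_i$, Cauchy--Schwarz yields $N^4/K \le \sum_i n_i^2 \le Q$, where $K$ is the number of non-empty intervals, so $K \gtrsim_\eps N^{1 - O(\eps)}$; picking one distance from every second non-empty interval gives $\gtrsim_\eps N^{1-O(\eps)}$ distances pairwise separated by $\ge N^{-1}$.

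\emph{Main obstacle.} The principal technicality is the restriction to $|u| \le 1$: quadruples whose associated rigid motion is nearly a translation (rotation angle $\ll 1$, so $|u| \gg 1$) are not captured by this single tube family. These should be handled by a dyadic decomposition over the scale of $|u|$, applying Theorem~\ref{gkwellspaced} at each scale to a suitably rescaled tube family, or alternatively by noting that near-translation quadruples satisfy $c - a \approx d - b$ for a common small vector, so their number is controlled directly by the planar well-spacing of $E$ via a short pigeonhole. A secondary point is the verification that the rescaled tube families at each $|u|$-scale still satisfy the well-spacing hypothesis, which again follows from the parallelogram identity.
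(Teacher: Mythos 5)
Your proposal is fundamentally the same as the paper's: Elekes--Sharir reduction to incidences of $\delta$-tubes in $\RR^3$, verification of the well-spacing hypothesis, application of Theorem~\ref{gkwellspaced}, a quadruple count via $\sum_g r(g)^2$, and Cauchy--Schwarz. Your parallelogram-identity verification of the $W^{-1}$-separation is a more explicit version of the paper's observation that $1/W$-tubes in $\RR^3$ are in bijection with ordered pairs of $1/W$-squares $(Q_1,Q_2)$, and that $T_{p,q}$ lies in the $1/W$-tube for $(Q_1,Q_2)$ iff $p\in Q_1$, $q\in Q_2$; both routes establish the same thing.

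The one genuine gap is exactly the ``main obstacle'' you flag, namely the restriction to $|u|\le 1$. As written you have bounded only those quadruples whose associated rigid motion has rotation angle bounded away from $0$ and $2\pi$, and you leave the rest to an unfleshed-out dyadic decomposition in $|u|$ or a pigeonhole. Your pigeonhole sketch handles the exactly-translational part (fixing the translation vector $v$ to within $\delta$, $\sum_v n(v)^2 \lesssim N^3$), but it does not by itself cover the full continuum of near-translations with $1 \ll |u| \lesssim N^{1/2}$, where the displacement $g(x)-x$ still varies by $O(1)$ across $[0,1]^2$; one would need a rescaled incidence estimate at each dyadic $|u|$-scale, and the well-spacing hypothesis has to be re-verified after rescaling. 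The paper sidesteps all of this with a cleaner device: it pigeonholes $E$ into two disjoint balls $B_1,B_2$ of radius $1/10$ at distance $\sim 1/3$ each containing $\gtrsim |E|$ points, sets $E_j = E\cap B_j$, and only counts quadruples $(p_1,p_2,q_1,q_2)\in E_1^2\times E_2^2$ with $\big|\,|p_1-q_1|-|p_2-q_2|\,\big|<\delta$. Any rigid motion sending $p_1\in B_1$ to $q_2\in B_2$ and $q_1\in B_2$ to $p_2\in B_1$ must essentially swap the two balls, which forces the rotation angle close to $\pi$, hence $u=\cot(\theta/2)$ bounded and the center in a fixed ball (this is the content of Lemma~\ref{ball}); the whole tube family then sits inside a fixed bounded box, and the $|u|$-decomposition is unnecessary. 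Since $\Delta(E_1,E_2)\subset\Delta(E)$ and $|E_1|,|E_2|\gtrsim N$, the Cauchy--Schwarz step goes through unchanged. You should either adopt this $E_1,E_2$ separation or carry out the dyadic $|u|$-decomposition in full; as it stands the near-translation regime is not actually controlled.
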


This theorem is relevant to the Falconer problem which is a kind of continuous analogue of the Erd{\H o}s distinct distance problem.  Falconer asked for the smallest Hausdorff dimension of a compact set $E \subset [0,1]^2$ which guarantees that $\Delta(E)$ has positive measure.  In \cite{Falc86}, Falconer proved that $\dim_H (E) > 3/2$ suffices, and he conjectured that $\dim_H (E) > 1$ suffices.  In \cite{Mat87}, Mattila proposed a Fourier analytic approach to the problem which connects it to restriction theory.  Using that connection, Wolff \cite{W99} proved that $\dim_H (E) > 4/3$ suffices.  Recently, using decoupling, the paper \cite{GIOW} proved that $\dim_H (E) > 5/4$ suffices.  Falconer's conjecture is closely related to the following conjecture about finite sets of balls.

\begin{conj} \label{falcdisc} Suppose that $\alpha > 1$.  Suppose that $E$ is a set of $\delta^{-\alpha}$ $\delta$-balls in $[0,1]^2$, and that any ball of radius $S \delta$ contains $\lesssim \delta^{-\eps} S^\alpha$ balls of $E$.  Then the number of $\delta$-intervals needed to cover $\Delta(E)$ is $\gtrsim_\eps \delta^{-1}$.  
\end{conj}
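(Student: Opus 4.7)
The plan is to adapt the Elekes--Sharir reduction used to derive Theorem~\ref{distdistwellspaced} from Theorem~\ref{gkwellspaced}, working in the general $\alpha$-dimensional setting. Set $N=\delta^{-\alpha}=|E|$ and suppose, toward a contradiction, that $\Delta(E)$ is covered by $K\ll\delta^{-1}$ $\delta$-intervals. To each ordered pair $(a,c)\in E\times E$ with $a\ne c$ I would associate the one-parameter family of orientation-preserving rigid motions $g$ of the plane with $g(a)=c$; thickening by $\delta$ gives a $\delta$-tube $T_{a,c}$ in the three-dimensional group $G$. The key identity is that $|a-b|=|c-d|$ iff $T_{a,c}\cap T_{b,d}\ne\emptyset$, so ordered $4$-tuples with equal distances correspond to tube intersections in $G$.

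The $N^{2}=\delta^{-2\alpha}$ ordered pairs fall into $K$ distance buckets, so Cauchy--Schwarz produces at least $N^{4}/K\gg\delta^{-4\alpha+1}$ ordered $4$-tuples $(a,b,c,d)$ with $|a-b|\approx|c-d|$, and hence that many incidences $g\in T_{a,c}\cap T_{b,d}$. The hypothesis on $E$ translates, via a direct computation on the Elekes--Sharir map, to a fractal non-concentration for the family $\TT=\{T_{a,c}\}$: few tubes lie in any $W^{-1}$-tube in $G$, and few cluster near any plane or degree-$2$ surface. I would then invoke a three-dimensional Guth--Katz-type incidence estimate in the spirit of Theorem~\ref{gkwellspaced} aiming for $\sum_{g}r(g)^{2}\lesssim\delta^{-\eps}|\TT|^{3/2}=\delta^{-\eps}\delta^{-3\alpha}$. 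Comparing with the Cauchy--Schwarz lower bound $\delta^{-4\alpha+1}$ forces a contradiction precisely when $\alpha>1$, which matches the range of the conjecture.

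The central obstacle, and the reason this is stated only as a conjecture, is the incidence step. Theorem~\ref{gkwellspaced} insists on the very rigid $|\TT|\sim W^{-4}$ relationship with $\lesssim 1$ tube per $W^{-1}$-tube, leaving no room for an $\alpha$-dimensional family when $\alpha<2$; its proof exploits this tight coupling between $|\TT|$ and $W$ at every scale. A proof of the conjecture would require a genuinely fractal extension: a $\delta$-tube incidence theorem in $\RR^{3}$ in which the total number of tubes may be far smaller than $W^{-4}$ and the only hypothesis is $\alpha$-dimensional non-concentration of $\TT$ in tubes, planes, and quadrics. Producing such a fractal Guth--Katz--Wolff theorem for tubes appears to demand new ideas beyond the polynomial partitioning and induction-on-scales methodology used in this paper, and this is where I would expect the main difficulty to lie.
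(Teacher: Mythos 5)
What you are analyzing is stated as a conjecture, not a theorem, and the paper does not prove it: the authors say explicitly that Theorem~\ref{distdistwellspaced} establishes the conjecture only up to a $\delta^\eps$ loss for sets $E$ that are as widely spaced as possible, and the rigorous result in that direction is Theorem~\ref{tent}, which replaces the $\alpha$-dimensional non-concentration hypothesis of Conjecture~\ref{falcdisc} by the much stronger condition that $E$ contains $\lesssim 1$ ball in each ball of radius $\delta^{s/2}$. Your account of the reduction is correct and matches what the paper actually carries out for Theorem~\ref{tent}: the Elekes--Sharir map, the $\delta$-tube and intersection dictionary (Lemmas~\ref{tube} and~\ref{ball}), and the Cauchy--Schwarz passage from quadruples to distances. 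You have also correctly located the real obstruction: Theorem~\ref{main} demands $|\TT|\approx W^{2(n-1)}$ with roughly one tube per essentially distinct $W^{-1}$-tube, and its induction-on-scales argument exploits that rigid coupling at every rescaling step. Under the conjecture's hypothesis one only has $|\TT|=|E|^2\sim\delta^{-2\alpha}$ together with a non-concentration condition at every intermediate scale, which is genuinely weaker when $\alpha<2$, so the paper's incidence theorems do not apply. Leaving this as a conjecture is precisely an acknowledgment that a ``fractal'' tube-incidence theorem of this type is not currently available.

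One numerical point in your sketch should be corrected. The target $\sum_g r(g)^2\lesssim\delta^{-\eps}|\TT|^{3/2}=\delta^{-3\alpha-\eps}$ is unattainable by this route: over the low range $r\lesssim\delta^{2-\eps}|\TT|$ the trivial bound $|P_r(\TT)|\le\delta^{-3}$ already contributes $\lesssim(\delta^{2-\eps}|\TT|)^2\delta^{-3}\sim\delta^{1-2\eps-4\alpha}$, which exceeds $\delta^{-3\alpha-\eps}$ exactly when $\alpha>1$. What is both achievable and sufficient is the weaker target $\#Q\lesssim\delta^{1-4\alpha-\eps}$, which still gives $\#\Delta(E)\gtrsim N^4/\#Q\gtrsim\delta^{-1+\eps}$; this is the exponent that appears in Proposition~\ref{quadruple} (with $s=\alpha$), and the low-$r$ trivial estimate already meets it. Thus the missing ingredient is confined to the high-$r$ regime, where one would need a version of Theorem~\ref{main} whose only hypothesis on $\TT$ is an $\alpha$-dimensional non-concentration in $W^{-1}$-tubes (plus, as you note, control on concentration near planes and degree-$2$ surfaces, which Theorem~\ref{gkwellspaced} currently subsumes into its strong spacing hypothesis). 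That is the open problem, and your identification of it is accurate.
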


Theorem \ref{distdistwellspaced} proves this conjecture up a factor of $\delta^\eps$ for sets $E$ that are as widely spaced as possible.  In the other direction, there has been some remarkable work by Orponen \cite{O17} and Keleti-Shmerkin \cite{KS18} on the case when $E$ is tightly spaced.  We say that $E$ is an Ahlfors-David regular set of $\delta$-balls if, for each ball of $E$, the concentric $S \delta$ ball contains $\approx S^\alpha$ balls of $E$.  Orponen's paper \cite{O17} implies that this conjecture holds up to a factor of $\delta^\eps$ for Ahlfors-David regular sets.  

Let us now describe the sharp examples for Theorem \ref{stw}, because these examples indicate an important structure that plays a role in the proofs.  
We pick $W$ balls of side length $A \delta$, for a parameter $A$ to be determined later, with centers evenly spaced along the line segment from $(0,0)$ to $(1,0)$.  For most $\theta$, we can arrange that one tube $T_{\theta, j}$ passes through each ball. So $\sim \delta^{-1}$ tubes pass through each ball.  We call these balls heavy balls.  On average, a point in one of these special balls lies in $A^{-1} \delta^{-1}$ tubes of $\TT$, and by perturbing the tubes by random translations of size $A \delta$, we can assume that most points of most heavy balls lie in $\sim A^{-1} \delta^{-1}$ tubes of $\TT$.  Now we choose $A$ so that $r = A^{-1} \delta^{-1}$.  We compute 

$$ |P_r(\TT)| \gtrsim W A^2 = W r^{-2} \delta^{-2} = W^{-1} r^{-2} | \TT |^2. $$

These heavy balls play an important role in the proof.  One key tool in our proof is a Fourier analysis argument which shows that if there are too many $r$-rich $\delta$-balls, then they have to be organized into larger heavy balls like in this example.  This Fourier analysis argument is based on arguments in the literature on projection theory, especially the recent paper by Orponen \cite{O17b}.  We combine this heavy ball lemma with the idea of partitioning, which comes from the incidence geometry literature.  In \cite{CEGSW}, Clarkson, Edelsbrunner, Guibas, Sharir and Welzl used the idea of partitioning to give a new proof of the Szemer\'edi-Trotter theorem and prove new theorems in incidence geometry, and Wolff in turn built on this partitioning idea in the papers mentioned above.

\section{Finding heavy balls}

\begin{prop} \label{heavyballs} Suppose that $P$ is a set of unit balls in $[0,D]^n$ and $\TT$ is a set of tubes of length $D$ and radius 1 in $[0,D]^n$.  Suppose that each ball of $P$ lies in $\approx E$ tubes of $\TT$.  Let $S$ be a scale.  Then either

\vskip5pt
	
{\bf Thin case.}  $  |P| \lessapprox S^n E^{-2} |\TT| D^{n-1}$, or

\vskip5pt

{\bf Thick case.} There is a set of disjoint $S$-balls $Q_j$ so that

\begin{enumerate}
	\item $ \cup_j Q_j$ contains a fraction $\gtrapprox_n 1$ of the cubes of $P$.
		
	\item Each $Q_j$ intersects $\gtrapprox_n S^{n-1} E$ tubes of $\TT$.
	
\end{enumerate}

\end{prop}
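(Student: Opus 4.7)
The plan is a high--low Fourier decomposition at scale $S$. Let me set $f(x) := \sum_{T \in \TT} \chi_T(x)$, so that by hypothesis $f \approx E$ on every ball of $P$. Fix a Schwartz bump $\phi$ with $\widehat{\phi}$ a smooth cutoff equal to $1$ near the origin, set $\phi_S(x) := S^{-n}\phi(x/S)$ so that $\widehat{\phi_S}$ is essentially the indicator of $\{|\xi| \lesssim S^{-1}\}$, and write $f = f_L + f_H$ with $f_L := f * \phi_S$ and $f_H := f - f_L$. Since $f \approx E$ on each ball of $P$, pigeonholing between the two summands splits $P$ into $P_L \cup P_H$: balls with $f_L \gtrsim E$ go into $P_L$ and those with $|f_H| \gtrsim E$ into $P_H$, with $\max(|P_L|, |P_H|) \gtrapprox |P|$.

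Case $|P_L| \gtrapprox |P|$ (Thick). For $p \in P_L$, the condition $f_L(p) \gtrsim E$ translates, via $\phi_S \approx S^{-n} \chi_{B(0, S)}$, to $\int_{B(p, S)} f \gtrapprox E S^n$. Every tube $T$ meeting $B(p, S)$ contributes $|T \cap B(p, S)| \lesssim S$ (a cylindrical segment of radius $1$ inside an $S$-ball, $S \ge 1$), so $B(p, S)$ must meet $\gtrapprox_n E S^{n-1}$ tubes, which is exactly the richness demanded in part (2). A standard Vitali argument then extracts a disjoint subfamily $\{Q_j\}$ of such heavy $S$-balls (possibly after inflating by a harmless constant) so that $\bigcup_j Q_j$ covers an $\gtrapprox_n 1$ fraction of $P_L$, and hence of $P$, yielding part (1).

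Case $|P_H| \gtrapprox |P|$ (Thin). Here the aim is a two-sided $L^2$ estimate $E^2 |P| \lessapprox \|f_H\|_2^2 \lessapprox S^n |\TT| D^{n-1}$, which rearranges to the Thin bound. The lower half is immediate: $\widehat{f_H}$ lives in $\{|\xi| \lesssim 1\}$, so $f_H$ is essentially constant at unit scale and $\int_p |f_H|^2 \gtrsim E^2$ for every $p \in P_H$. The hard part, and the main obstacle, is the upper bound. By Plancherel, $\|f_H\|_2^2 = \int |\widehat{f}(\xi)|^2 |1 - \widehat{\phi_S}(\xi)|^2 d\xi$ is supported in $\{S^{-1} \lesssim |\xi| \lesssim 1\}$, and each $\widehat{\chi_T}$ concentrates on the slab perpendicular to the tube direction $v_T$ (of thickness $D^{-1}$ and radius $\sim 1$), so at a fixed frequency $\xi$ only tubes whose direction is nearly perpendicular to $\xi$ contribute, providing approximate orthogonality. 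The plan is to quantify this orthogonality by a dyadic decomposition in frequency combined with Cauchy--Schwarz, using the subtraction $\chi_T \mapsto \chi_T - \chi_T * \phi_S$ to annihilate exactly the low-frequency, near-parallel pairs of tubes that would otherwise inflate the naive $\|f\|_2^2$. This is the step that follows the Fourier analytic template of Orponen's projection argument in \cite{O17b}, and carrying it out in $n$ dimensions without any direction-separation hypothesis on $\TT$, while tracking the logarithmic losses absorbed into $\lessapprox$, is the crux.
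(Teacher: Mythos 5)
Your overall architecture matches the paper's: a high–low Fourier decomposition at scale $S$, with the low-frequency piece producing heavy $S$-balls (Thick case) and the high-frequency piece controlled by directional orthogonality of the tubes' Fourier supports (Thin case). The packaging differs slightly — you split the tube function $f=\sum_T\chi_T$ pointwise and pigeonhole $P$ into $P_L\cup P_H$, whereas the paper sets $g=\sum_T\psi_T$, $f=\sum_{q\in P}\psi_q$, and performs the high–low split on the incidence integral $\int f\overline{g}\approx E|P|$, then applies Cauchy–Schwarz. These are equivalent in spirit, and your Thick-case computation ($f_L(p)\gtrsim E\Rightarrow \int_{B(p,S)}f\gtrsim ES^n\Rightarrow B(p,S)$ meets $\gtrsim ES^{n-1}$ tubes, then Vitali) is correct and matches the paper's.

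The gap is exactly where you flag it: you do not actually carry out the upper bound $\|f_H\|_2^2\lessapprox S^n D^{n-1}|\TT|$. This is the whole content of the Thin case, and declaring it ``the crux'' does not substitute for doing it. In fact the estimate is considerably less involved than your sketch suggests — no dyadic decomposition in $|\xi|$ is needed. The paper's argument is a single Cauchy–Schwarz over $1/D$-caps $\theta$ of the sphere: for any frequency $\omega$ with $|\omega|\gtrsim S^{-1}$, the number of caps $\theta$ whose slab $\theta^*$ (a $D^{-1}\times 1\times\cdots\times 1$ plank through the origin) contains $\omega$ is $\lesssim SD^{n-2}$, because $|\omega\cdot v_\theta|\lesssim D^{-1}$ confines $v_\theta$ to a band of width $\lesssim D^{-1}/|\omega|\lesssim D^{-1}S$ on $S^{n-1}$; then Cauchy–Schwarz gives $|\sum_\theta\widehat{g_\theta}(\omega)|^2\lesssim SD^{n-2}\sum_\theta|\widehat{g_\theta}(\omega)|^2$, and since tubes within a fixed $\theta$ are essentially disjoint, $\sum_\theta\int|\widehat{g_\theta}|^2\lesssim D|\TT|$. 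You should also replace $\chi_T$ with a smooth bump $\psi_T$ (as the paper does): since $\chi_T$ is not Schwartz, $\widehat{\chi_T}$ is not actually concentrated on $\theta^*$, only up to non-uniform tails, and your claim that $\widehat{f_H}$ ``lives in $\{|\xi|\lesssim 1\}$'' — used for the lower bound $\int_p|f_H|^2\gtrsim E^2$ — requires the rapid decay of $\widehat{\psi_T}$ to be honest. Finally note this $L^2$ step implicitly requires the tubes of $\TT$ in a given direction to be essentially disjoint; without some essential-distinctness hypothesis the Proposition can fail, and you should state that you are using it.
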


Before giving the proof, let us discuss the numerology.  When we apply the Proposition, $S$ will be small, and so the $S$ factor in the thin case will be negligible.  For the thin case, let us focus on dimension $n=2$.  In this case, if the directions of the tubes $\TT$ are evenly spaced, we would get $|P| \lessapprox E^{-2} | \TT|^2$ (see Lemma \ref{kakmax} below).  If $|\TT|$ is much bigger than $D$, then the bound in the thin case represents a savings.  Now we turn to the thick case.  Without loss of generality, we can think of $P$ as $P_E(\TT)$.  In the thick case, a typical unit cube in one of the $S$-cubes $Q_j$ lies in $\sim E$ tubes of $\TT$, and so morally all the unit cubes of each $Q_j$ lie in $P_E(\TT)$.  This structure matches the heavy balls we saw in the sharp example for Theorem \ref{stw} in the introduction.  However, since we use small $S$, applying this Proposition does not immediately find the whole heavy ball in the example -- only a smaller heavy sub-ball.  In the full proof of Theorem \ref{stw}, we will use this lemma many times in an iteration scheme.

\begin{proof} The proof of Proposition \ref{heavyballs} is based on Fourier analysis.  For each unit cube $q$ of $P$, we let $\psi_q$ be a smooth bump approximating $\chi_q$.  Let $f = \sum_{q \in P} \psi_q$.  For each tube  $T$ in $\TT$, let $\psi_T$ be a smooth bump approximating $\chi_T$.  Let $g = \sum_{T\in \TT} \psi_T$.  If $q$ intersects $T$, then $\int \psi_q \psi_T\gtrsim 1$, and so

$$ I(P,\TT) \lesssim \int f g. $$

We apply Plancherel: $ \int f g = \int \hat f \bar{\hat g}$.  Next we decompose Fourier space into high-frequency and low-frequency pieces.  We let $\rho$ be a frequency cutoff, which is slightly larger than $S^{-1}$: $\rho$ is $D^\eps S^{-1}$ for a tiny $\eps > 0$.  We let $\eta$ be a smooth bump which is equal to 1 on the ball of radius $\rho$, and which is supported in the ball of radius $2 \rho$.  

$$ I(P,L) \lesssim \int \eta \hat f \bar {\hat g} + \int (1 - \eta) \hat f \bar {\hat g}. $$

If the high frequency piece dominates, we will show that the conclusion of thin case holds, and if the low frequency piece dominates, then we will show that the conclusion of the thick case holds.

\vskip10pt

{\bf The high frequency case.} If the high-frequency term dominates, then we have

$$ I(P,L) \lesssim \int (1 - \eta)  \hat f \bar {\hat g} \le \left(\int (1-\eta) |\hat f|^2\right)^{1/2}  \left(\int (1-\eta)  |\hat g|^2\right)^{1/2}. $$

We bound the factor involving $f$ by $\| \hat f \|_{L^2} = \| f \|_{L^2} \sim |P|^{1/2}$.

To bound the factor involving $g$, we take advantage of the support of the Fourier transform of $\psi_T$.  Cover the unit sphere $S^{n-1}$ by $1/D$-caps $\theta$.  Let $\TT_\theta$ be the set of $T \in \TT$ in direction $\theta$, and let $g_\theta = \sum_{T \in \TT_\theta} \psi_T$.  If $T$ is a $1 \times D$ tube in direction $\theta$, then $\hat \psi_T$ is rapidly decaying outside of $\theta^*$.  Here $\theta^*$ is a $D^{-1} \times 1 \times ... \times 1$ slab through the origin perpendicular to $\theta$.  Now we consider the integral

\begin{equation} \label{expandtheta}  \int (1 - \eta) | \hat g |^2 = \int (1 - \eta(\omega)) \left| \sum_\theta \hat g_\theta(\omega) \right|^2 d \omega. \end{equation}

\noindent If $1 - \eta(\omega) \not= 0$, then $|\omega| \ge \rho$.  In that case, $\omega$ belongs to $\theta^*$ for $\lesssim \rho^{-n} D^{n-2}$ different $\theta$.  (For comparison, note that the total number of $\theta$ is $D^{n-1}$.)  Applying Cauchy-Schwarz, we see that for any $N$, 

$$  \left| \sum_\theta \hat g_\theta(\omega) \right|^2 \lesssim \rho^{-n} D^{n-2} \sum_\theta | \hat g(\omega|)|^2 + C_N D^{-N}. $$

The term $C_N D^{-N}$ accounts for the rapidly decaying tails of the functions $\hat g_\theta$.  This term is negligible, and we ignore it in the sequel.  Plugging our bound into (\ref{expandtheta}), we see that

$$ \int(1 - \eta) | \hat g|^2 \lesssim \rho^{-n} D^{n-2} \sum_\theta \int |\hat g_\theta|^2 = \rho^{-n} D^{n-2} \sum_\theta \int |g_\theta|^2. $$

Now for each $\theta$, the tubes $T \in \TT_\theta$ are disjoint, and so

$$  \rho^{-n} D^{n-2} \sum_\theta \int |g_\theta|^2 = \rho^{-n} D^{n-2} \sum_{T \in \TT} \int |\psi_T|^2 \sim \rho^{-n} D^{n-1} |\TT|. $$

Combining what we've done so far, we see that in the high-frequency case

$$ I(P,L) \lesssim \rho^{-n/2} D^{\frac{n-1}{2}} |P|^{1/2} |\TT|^{1/2}. $$

On the other hand, we know that

$$ I(P,L) \approx E |P| . $$

Rearranging, we get

$$ |P| \lesssim  \rho^{-n} E^{-2} D^{n-1} | \TT| \lesssim S^{-n} E^{-2} D^{n-1} | \TT|. $$

\vskip10pt

{\bf The low frequency case.} If the low frequency case dominates, then we have 

$$ I(P,\TT) \lesssim \int \eta \hat f \bar{\hat g} = \int f (g* \eta^{\vee}) = \sum_{q \in P} \sum_{T \in \TT} \int \psi_q (\psi_T* \eta^\vee). $$

Now $\psi_T * \eta^\vee$ is rapidly decaying outside of the $\rho^{-1} \times D$ tube around $T$, and $|\psi_T * \eta^\vee| \lesssim \rho^{n-1}$.  We write $N_S(q)$ for the $S$-neighborhood of $q$, which is essentially a ball of radius $S$.  Since $S = D^\eps \rho^{-1}$, $\psi_T* \eta^\vee$ is negligible outside of the $S \times D$ tube around $T$.  Therefore, 

$$ \sum_{T \in \TT} \int \psi_q (\psi_T* \eta^\vee) \lesssim \rho^{-(n-1)} \# \{ T \in \TT | T \cap N_{S}(q) \not= \emptyset \} \lessapprox S^{-(n-1)} \# \{ T \in \TT | T \cap N_{S}(q) \not= \emptyset \}. $$

Now $I(P,\TT) \approx E |P|$, so 

$$ E |P| \lessapprox \sum_{q \in P}  S^{-(n-1)} \# \{ T \in \TT| T \cap N_{S}(q) \not= \emptyset \}. $$

So for a fraction $\gtrapprox 1$ of $q \in P$, 

$$  \# \{ T \in \TT| T \cap N_{S}(q) \not= \emptyset \} \gtrapprox S^{n-1}  E. $$

This is the desired estimate in the thick case.

\end{proof}

\section{Proof of  Theorem \ref{stw}}

We start by proving Theorem \ref{stw} because the argument is slightly less complicated and because the role of the heavy balls is clearest.  We recall the statement.

\begin{theorem*} 
	 Let $1 \le W \le \delta^{-1}$.  Divide the circle into arcs $\theta$ of length $\delta$.  For each $\theta$, and each $1 \le j \le W$, let $T_{\theta,j} \subset [0,1]^2$ be a $\delta$-tube.  Suppose that for each $\theta$, and each $W^{-1} \times 1$ rectangle in direction $\theta$, there are $\sim N_1$ tubes $T_{\theta, j}$ in the rectangle.  Let $\TT$ be the set of all the tubes $T_{\theta, j}$.   Then for any $\eps>0$ 
	
	$$\textrm{ if } r \ge C_1(\eps) \delta^{1 - \eps} | \TT|, $$
	
	$$\textrm{ then } |P_r(\TT)| \le C_2(\eps) \delta^{-\eps} W^{-1} r^{-2} | \TT|^2. $$
\end{theorem*}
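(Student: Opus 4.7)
The proof is a multi-scale iteration of Proposition \ref{heavyballs}, combined with the ``partitioning'' philosophy mentioned in the introduction. First rescale by $\delta^{-1}$ so that Proposition \ref{heavyballs} applies directly with $D=\delta^{-1}$, $n=2$, unit balls, and unit-radius tubes of length $D$; dyadically pigeonhole on the richness so that every ball in $P$ has richness $E\approx r$. Fix a small parameter $\alpha=c\eps$ (with $c$ a small absolute constant) to control the per-iteration loss.

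I would then iterate Proposition \ref{heavyballs} along the scales $S_k=\delta^{-k\alpha}$, for $k=0,1,\dots,K$ with $K=O(1/\alpha)$. At stage $k$ I maintain a family $\mathcal{Q}_k$ of disjoint $S_k$-balls which together contain (most of) $P$ and such that each $Q\in\mathcal{Q}_k$ intersects $\gtrapprox S_kE$ tubes of $\TT$. To pass from stage $k$ to stage $k+1$, rescale by $S_k^{-1}$ and replace the thin tubes by fat tubes of radius $S_k$. Using the well-spacedness hypothesis, each fat tube contains $\approx S_k^2\delta WN_1$ thin tubes of $\TT$, and there are $\lesssim (S_k\delta)^{-2}$ essentially distinct fat tubes; the effective richness of each $S_k$-ball in fat tubes is therefore $\approx E/(S_k\delta WN_1)$. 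Apply Proposition \ref{heavyballs} to this rescaled configuration with new scale parameter $\delta^{-\alpha}$. In the thin case, substituting the bounds above together with $|\TT|=\delta^{-1}WN_1$ yields
$$|P|\;\lesssim\;|\mathcal{Q}_k|\,S_k^2\;\lesssim\;\delta^{-O(\alpha)}W^{-1}r^{-2}|\TT|^2,$$
which is the desired bound up to a $\delta^{-O(\alpha)}$ loss. In the thick case, we obtain $\mathcal{Q}_{k+1}$ at the larger scale $S_{k+1}$ with richness upgraded by the $S^{n-1}=S$ factor, and we continue.

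The iteration must terminate: the maximum number of tubes through an $S_k\delta$-ball is controlled by the well-spacedness, and once $S_k\delta$ reaches the heavy-ball scale $r^{-1}$ from the sharp example in the introduction, this maximum becomes incompatible with the $S_kr$-richness required by the thick case, forcing the thin case. The hypothesis $r\ge C_1(\eps)\delta^{1-\eps}|\TT|$ is precisely what guarantees this termination within $K=O(\eps/\alpha)$ steps. Accumulating the $\delta^{-O(\alpha)}$ losses over the $K$ iterations (including the $D^\eps$ factors hidden in the $\lessapprox$ notation of Proposition \ref{heavyballs}) gives a total loss of $\delta^{-O(K\alpha)}=\delta^{-O(\eps)}$, which is $\le\delta^{-\eps}$ after choosing the constant $c$ small enough.

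The main obstacle is the fat-tube bookkeeping at each recursive step. We must verify: (i) the number of essentially distinct fat tubes and the number of thin tubes per fat tube are correctly estimated from the well-spacedness; (ii) the conclusion of Proposition \ref{heavyballs} in the thick case, which only directly guarantees many thin tubes through each $Q_j$, actually delivers enough fat tubes through $Q_j$ to set up the next application of the Proposition (otherwise all those thin tubes would be concentrated into too few fat tubes, contradicting the spacing); and (iii) a usable form of the well-spacedness hypothesis survives the fattening so that the rescaled problem still fits the framework of Proposition \ref{heavyballs}. It is exactly here that the parameters $W$, $N_1$, and the richness lower bound on $r$ interact to close the induction and match the $W^{-1}r^{-2}|\TT|^2$ target.
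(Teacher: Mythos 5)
Your high-level plan — set up the dichotomy of Proposition \ref{heavyballs}, fatten tubes and move to a coarser scale in the thick case, and close with a thin-case estimate — does track the paper's structure. But you are proposing a genuinely different mechanism from the paper at two places, and the details of both need repair.

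First, the paper's thick case does not iterate Proposition \ref{heavyballs}; it invokes Theorem \ref{stw} itself at the coarser scale $\tilde\delta = S\delta$ after pigeonholing on the multiplicity $N$ of thin tubes per fat tube, so that $|\tilde\TT|\lesssim N^{-1}|\TT|$ and $\tilde r\gtrapprox N^{-1}Sr$, and the $N$-dependence then \emph{cancels} because the theorem's bound carries the homogeneous combination $\tilde r^{-2}|\tilde\TT|^2$. Your sketch replaces this by the assertion that each fat tube of radius $S_k\delta$ contains $\approx S_k^2\delta WN_1$ thin tubes; that is only the average, and nothing in the spacing hypothesis prevents the $\gtrapprox S_k r$ thin tubes hitting a given $S_k$-ball from being concentrated in far fewer fat tubes, which drops $E_{\mathrm{fat}}$ below your estimate and inflates the $E_{\mathrm{fat}}^{-2}$ term in the thin case. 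You need a pigeonhole on $N$ (and also on the richness at each stage, since Proposition \ref{heavyballs} requires each ball to lie in $\approx E$ tubes, not merely $\gtrsim E$) and then a check that the bound is monotone in $N$ across the whole allowed range; the paper sidesteps this entirely by handing the coarser-scale problem to the inductive hypothesis.

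Second, the loss bookkeeping as written is not correct. The iteration terminates at the heavy-ball scale $S_K\delta\sim r^{-1}$, and since $r$ can be as small as roughly $\delta^{1-\eps}|\TT|$, the number of scales $K$ is of order $1/\alpha$, not $O(\eps/\alpha)$; consequently $\delta^{-O(K\alpha)}=\delta^{-O(1)}$, which is fatal if taken literally. What actually saves the argument is that the per-step losses coming from the $\lessapprox/\gtrapprox$ in Proposition \ref{heavyballs} are of the form $D^{\eps'}$ for an $\eps'$ that can be chosen independently of $\alpha$, and you must choose $\eps'\ll\alpha\eps$ so that $\delta^{-K\eps'}\le\delta^{-\eps/2}$; the $\delta^{-O(\alpha)}$ factor from $S^n$ in the final thin-case application is a one-time loss, not a per-iteration one. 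The paper avoids this entirely by building the $\delta^{-\eps}$ factor into the inductive hypothesis so that each level gains $S^{-\eps}$, which absorbs the per-level $\lessapprox$ loss provided $S$ is at least a small power of $\delta^{-1}$.

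Finally, a structural remark: the paper's thin case does not apply Proposition \ref{heavyballs} globally. It first localizes to $W\delta$-balls $Q$, then uses the partitioning of $\TT$ into $W^2$ families $\TT_R$ indexed by $W^{-1}\times1$ rectangles, rescales each $R$, and applies the planar Kakeya maximal estimate (Lemma \ref{kakmax}) to bound $\sum_Q|\TT_Q|$. You do not use Lemma \ref{kakmax} at all; instead you read off the global thin-case bound $|P|\lessapprox S^2E^{-2}|\TT|\delta^{-1}$ and compare with the target using $|\TT|=\delta^{-1}WN_1$ and $N_1\ge1$. That comparison does work out, and this is a genuine simplification over the paper if the rest of the iteration can be made rigorous — but you should flag that you are discarding the partitioning/Kakeya ingredient, since it is one of the paper's two base cases ($W\le\delta^{-\eps/10}$ via Lemma \ref{kakmax}) and is not recovered automatically by your global application. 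In short: promising and in places sharper than the paper's route, but the $N$-pigeonhole, the richness pigeonhole at each stage, and the $\eps'$ vs.\ $\alpha$ bookkeeping are all missing and are each substantial.
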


We will first prove that the theorem holds when $W=1$.  This essentially follows from the proof of the Kakeya maximal estimate in the plane.

\begin{lemma} \label{kakmax} Suppose that for each $\delta$-arc $\theta$, $\TT_\theta$ is a set of $\sim N_1$ $\delta \times 1$ rectangles in $[0,1]^2$.  Let $\TT= \cup_\theta \TT_\theta$.  Then
	
	$$ |P_r(\TT)| \lessapprox r^{-2} | \TT|^2. $$
\end{lemma}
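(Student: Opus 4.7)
The plan is to establish the lemma via the classical $L^{2}$-based Kakeya maximal-function argument in the plane. Writing $P = P_r(\TT)$ and $I_q = \#\{T \in \TT : q \in T\}$ for each $q \in \delta\ZZ^{2} \cap [0,1]^{2}$, the definition of $r$-rich gives $I(P,\TT) = \sum_{q \in P} I_q \ge r|P|$. Squaring and applying Cauchy--Schwarz, I obtain
\[
r^{2}|P|^{2} \;\le\; |P| \sum_{q \in P} I_q^{2} \;\le\; |P| \sum_{q \in \delta\ZZ^{2} \cap [0,1]^{2}} I_q^{2} \;=\; |P| \sum_{T_{1}, T_{2} \in \TT} N(T_{1},T_{2}),
\]
where $N(T_{1},T_{2})$ denotes the number of $\delta$-lattice points in $T_{1}\cap T_{2}$. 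The lemma therefore reduces to the double-sum bound $\sum_{T_{1},T_{2}} N(T_{1},T_{2}) \lessapprox |\TT|^{2}$.

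For this double sum I would use the elementary geometric fact that two $\delta \times 1$ rectangles $T_{1}, T_{2}$ whose directions meet at angle $\alpha$ satisfy $N(T_{1},T_{2}) \lesssim (\alpha+\delta)^{-1}$ (the intersection is either empty or an $\sim \delta\times\delta/(\alpha+\delta)$ pixel). Fix $T_{1} \in \TT_{\theta_{1}}$. Using only $|\TT_{\theta_{2}}| \lesssim N_{1}$ and summing over $T_{2} \in \TT_{\theta_{2}}$ gives
\[
\sum_{T_{2} \in \TT_{\theta_{2}}} N(T_{1},T_{2}) \;\lesssim\; \frac{N_{1}}{|\theta_{1}-\theta_{2}|+\delta}.
\]
The directions $\theta_{2}$ form a $\delta$-spaced subset of the circle, so the outer sum is the harmonic series $\sum_{\theta_{2}}(|\theta_{1}-\theta_{2}|+\delta)^{-1} \sim \delta^{-1}\log \delta^{-1}$. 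Thus $\sum_{T_{2} \in \TT} N(T_{1},T_{2}) \lesssim N_{1}\delta^{-1}\log\delta^{-1} = |\TT|\log\delta^{-1}$, and summing once more over $T_{1}$ yields $\sum_{T_{1},T_{2}} N(T_{1},T_{2}) \lesssim |\TT|^{2}\log\delta^{-1}$, which is $\lessapprox |\TT|^{2}$. Plugging back into the Cauchy--Schwarz inequality gives the claim.

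The only point requiring care is that no within-direction spacing is assumed, so all $N_{1}$ tubes of $\TT_{\theta}$ might coincide, and two coincident tubes share up to $\delta^{-1}$ common $\delta$-balls. However, this degenerate case is precisely the $\theta_{2}=\theta_{1}$ term in the outer sum, whose contribution $N_{1}\cdot\delta^{-1} = |\TT|$ matches exactly the budget coming from the transverse angles. The only price paid for not having uniform within-direction spacing is the logarithmic factor $\log\delta^{-1}$, which the $\lessapprox$ notation is designed to absorb.
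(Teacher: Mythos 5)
Your argument is correct and is essentially the paper's proof in a slightly different guise: the paper estimates $\int g^2$ by a dyadic decomposition over pairs of well-separated angular arcs and then compares against $\int g^2 \ge |P_r| r^2 \delta^2$, while you estimate the equivalent discrete quantity $\sum_{T_1,T_2} N(T_1,T_2) \approx \delta^{-2}\int g^2$ by a harmonic-series sum over angles and compare against it via Cauchy--Schwarz on the incidence count. Both routes use the same two ingredients — transverse tubes at angle $\alpha$ meet in $\sim \alpha^{-1}\delta^2$ area, and the angular regularity $|\TT_\theta| \sim N_1$ — and both incur exactly one logarithm.
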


\begin{proof} Let $g = \sum_{T \in \TT} \chi_T$.  For any arc of the circle, $\tau$, we let $\TT_\tau$ be the set of tubes of $\TT$ with direction in $\tau$, and we let $g_\tau = \sum_{T\in \TT_\tau} \chi_T$.  We will estimate $\int |g|^2$.  If $\tau_1$ and $\tau_2$ are $\alpha$-arcs and the distance between them is $\sim \alpha$, then
	
$$ \int g_{\tau_1} g_{\tau_2} = \sum_{T_1 \in \TT_{\tau_1}, T_2 \in \TT_{\tau_2}} \int \chi_{T_1} \chi_{T_2} \sim | \TT_{\tau_1} | |\TT_{\tau_2}| \alpha^{-1} \delta^2. $$

Since the directions of tubes are evenly distributed on the circle, $|\TT_{\tau_i}| \sim \alpha | \TT|$, and so

$$ \int g_{\tau_1} g_{\tau_2} \lesssim \alpha | \TT|^2 \delta^2. $$

Now we expand 

$$ \int g^2 = \sum_{s=1}^{\log \delta^{-1}} \sum_{\tau_1, \tau_2 2^{-s} \textrm{ arcs of } S^1, \sim 2^{-s} \textrm{ separated }} \int g_{\tau_1} g_{\tau_2} \lesssim \sum_{s=1}^{\log \delta^{-1}} 2^s 2^{-s} | \TT|^2 \delta^2 \lessapprox |\TT|^2 \delta^2. $$

On the other hand, $\int g^2 \ge |P_r(\TT)| r^2 \delta^2$.  \end{proof}

\begin{proof}[Proof of Theorem \ref{stw}]
The proof is by induction, and there are two base cases.  The first base case is when $W \le \delta^{-\eps/10}$, which follows from Lemma \ref{kakmax}.  

The second base case is when $W$ is almost as big as $\delta^{-1}$.  If $W = \delta^{-1}$, then $\TT$ must consist of essentially all of the $\delta^{-2}$ distinct $\delta$-tubes in $[0,1]^2$.  Formally, the second base case is when $W \ge \delta^{-1 + \eps/2}$.  In this case, we see that $r \ge \delta^{1 - \eps} |\TT| \ge \delta^{1 - \eps} \delta^{-1} W > \delta^{-1 - \eps/2}$.   But the number of distinct $\delta$-tubes through a $\delta$-cube is $\lesssim \delta^{-1}$, and so $P_r(\TT)$ is empty.

Now we begin the inductive argument.  Let $P$ be the set of $\delta$-balls lying in $\sim r$ tubes of $\TT$.  By backwards induction on $r$, we can assume that $|P| \approx |P_r(\TT)|$.

Let $1 \le D \le W$ be a parameter.  In this proof, we will eventually choose $D=W$, but we keep the $D$ notation here to help prepare for another proof in the next section, where we will choose $D$ differently.

We cover the unit square with $D \delta$-balls $Q$.   A tube $T$ intersects $Q$ in a $\delta \times D \delta$ rectangle.  One such rectangles could lie in many tubes $T \in \TT$.  Let $\TT_{Q, M}$ be the set of $\delta \times D \delta$ rectangles in $Q$ which lie in $\approx M$ tubes of $T$.  We choose $M$ to preserve most of the incidences.  More precisely, we can choose $M$ so that 

\begin{equation} \label{goodM} \sum_Q  M I(P \cap Q, \TT_{Q, M}) \gtrapprox I(P, \TT). \end{equation}

\noindent Once we fix $M$, we abbreviate $\TT_{Q,M}$ to $\TT_Q$.  Let $P_{Q, E}$ be the set of $\delta$-cubes of $P \cap Q$ that lie in $\sim E$ tubes of $\TT_Q$.  We choose $E$ so that

\begin{equation} \label{goodE} \sum_Q  M I(P_{Q,E}, \TT_{Q}) \gtrapprox I(P, \TT). \end{equation}

\noindent Once we fix $E$, we abbreviate $P_Q = P_{Q, E}$.  Because each $q \in P$ lies in $\sim r$ tubes of $\TT$, (\ref{goodE}) implies that

$$ |P| \lessapprox \sum_Q |P_Q|. $$

Also, the left hand side of (\ref{goodE}) is $\approx M E \sum_Q |P_Q| \le ME |P|$, and the right-hand side is $\sim r |P|$, and so

$$ M E \gtrapprox r. $$

Next we apply Proposition \ref{heavyballs} to bound each $|P_Q|$.  We set the scale $S$ to be $D^{\eps/10}$.  For each $Q$, we will be in either the thin case or the thick case.

\subsection{Thin case} 

\begin{equation} \label{thincase} |P| \lessapprox \sum_{Q \textrm{ thin}} |P_Q| \lessapprox D^{\eps/10} E^{-2} D \sum_Q | \TT_Q|. \end{equation}

Next we prepare to estimate $\sum_Q | \TT_Q|$.  We cover the circle $S^1$ with arcs $\tau$ of length $D^{-1}$.  For each arc $\tau$, cover the unit disk with parallel $D^{-1} \times 1$ rectangles $R_{\tau, j}$ in the direction $\tau$.  There are $D$ rectangles $R_{\tau, j}$ for each direction $\tau$ and there are $D$ arcs $\tau$, and so the total number of rectangles is $\sim D^2$.  Each $\delta \times 1$ rectangle $T$ lies in $2 R_{\tau, j}$ for some $\tau, j$.  Assign each $T$ to one of the rectangles $R$ containing it. Let $\TT_R$ be the set of $T \in \TT$ assigned to the rectangle $R$.  For each $R$, $| \TT_R| \sim D^{-2} | \TT|$. 

By construction, each $\delta \times D \delta$ tube $S \in \TT_Q$ lies in $\approx M$ tubes $T \in \TT$.  All these tubes $T \in \TT$ make an angle of at most $D^{-1}$ with each other, and so they all belong to $\TT_R$ for a single rectangle $R$.  We let $S_M(\TT_R)$ be the set of $\delta \times D \delta$ rectangles which lie in $\sim M$ tubes of $\TT_R$.  We have

\begin{equation} \label{sumq} \sum_Q | \TT_Q| \lesssim \sum_R |S_M(\TT_R)|. \end{equation}

To study $S_M(\TT_R)$, we rescale $R$.  If we magnify $R$ by a factor of $D$ in the short direction, we get a unit square, and the tubes of $\TT_R$ are transformed to rectangles in this unit square with dimensions $D \delta \times 1$.  Let $\TT_R'$ denote this set of $D \delta \times 1$ rectanges in the unit square.  There is a one-to-one correspondence between $S_M(\TT_R)$ and $P_M(\TT_R')$.  Since $D \le W$, the set $\TT_R'$ obeys the hypotheses of Theorem \ref{stw} with

$$ \tilde \delta = D \delta; \tilde W = D^{-1} W. $$

At this point, we choose $D = W$.  Now $\TT_R'$ consists of $N_1$ $\tilde \delta \times 1$ tube in each direction.  We estimate $|P_M(\TT'_R)|$ using Lemma \ref{kakmax}:

$$|P_M (\TT_R')| \lessapprox M^{-2} |\TT_R'|^2 \lessapprox M^{-2} W^{-4} |\TT|^2. $$

Plugging into (\ref{sumq}) gives

$$ \sum_Q |\TT_Q'| \le \sum_R |P_M(\TT_R')| \lessapprox M^{-2} W^{-2} | \TT|^2.$$

Finally, we plug this into (\ref{thincase}) and note that $D=W$ to get

$$ |P_r(\TT)| \lesssim |P| \lessapprox W^{\eps/10} E^{-2} M^{-2} W^{-1} | \TT|^2. $$

Since $M E \gtrapprox r$ and $W \le \delta^{-1}$, we get the deisred bound.

\subsection{Thick case}  Otherwise we are in the thick case, meaning that

\begin{equation} \label{thickcase} |P| \lessapprox \sum_{Q \textrm{ thick}} |P_Q|. \end{equation}

Recall that if $Q$ is thick, there is a set of disjoint $S \delta$-balls $Q_j \subset Q$ so that

\begin{enumerate}
	\item $\cup_j Q_j$ contains a fraction $\gtrapprox 1$ of the cubes of $P_Q$. 
	
	\item Each $Q_j$ intersects $\gtrapprox S E$ tubes of $\TT_Q$.
	
\end{enumerate}

We let $\tilde P$ be the union of all the $Q_j$ from all the thick $Q$.  Each $\delta \times D \delta$ tube of $\TT_Q$ lies in $M$ tubes of $\TT$, and so each $Q_j$ intersects $\gtrapprox S M E \gtrapprox S r$ tubes of $\TT$.  Therefore,

\begin{enumerate}
	\item $\cup_{Q_j \in \tilde P} Q_j$ contains a fraction $\gtrapprox 1$ of the cubes of $P$.
	
	\item Each $Q_j$ in $\tilde P$ intersects $\gtrapprox S r$ tubes of $\TT$.  
\end{enumerate}

Recall that we chose $S = D^{ \eps/10} \le \delta^{-\eps/10}$.  If $S \delta > W^{-1}$, then it follows that  $W \ge \delta^{-1 + \eps/2}$ and we are in the second base case of the induction.  

Otherwise, $S \delta < W^{-1}$.  In this case, we thicken our $\delta$-tubes to $S \delta$-tubes.  For a given $N$, define $\tilde \TT_N$ to be the set of $S \delta \times 1$ tubes containing $\approx N$ tubes of $\TT$.  By pigeonholing, we can choose $N$ so that the tubes of $\tilde \TT_N$ contain a fraction $\gtrapprox 1$ of the incidences between $\TT$ and $\tilde P$.  We fix $N$, and we define $\tilde \TT = \tilde \TT_N$.  We have $| \tilde \TT| \lesssim N^{-1} | \TT|$.  A typical $S \delta$-ball of $\tilde P$ is $\tilde r \gtrapprox N^{-1} S r$-rich for $\tilde \TT$.  Since each such $S \delta$-ball contains only $S^2$ $\delta$-balls,

\begin{equation} \label{PrT} |P_r(\TT)| \lessapprox S^2 |P_{ \tilde r} (\tilde \TT)|, \textrm{ where } \tilde r \gtrapprox N^{-1} S r.  \end{equation}

Now we will apply induction on $\delta$ to bound $P_{\tilde r} (\tilde \TT)$.  The set $\tilde \TT$ essentially obeys the hypotheses of Theorem \ref{stw} with $\tilde \delta = S \delta$ and $\tilde W = W$.   Since $S \delta < W^{-1}$, $1 \le W \le \tilde \delta^{-1}$, which checks the first hypothesis.  The number of tubes in $\tilde \TT$ is at most $N^{-1} | \TT|$, and the number of tubes of $\tilde \TT$ contained in a $W^{-1} \times 1$ rectangle is at most $W^{-2} N^{-1} | \TT|$.  By adding tubes to $\tilde \TT$, we can arrange that $| \tilde \TT| \sim N^{-1} | \TT|$, and the number of tubes of $\tilde \TT$ contained in a $W^{-1} \times 1$ rectangle is $\sim W^{-2} N^{-1} | \TT| = W^{-2}  | \tilde \TT|$.   Finally we have to check that $\tilde r$ is big enough:

$$ \tilde r \gtrapprox N^{-1} S r \ge N^{-1} S C_1(\eps) \delta^{1-\eps} | \TT| \sim S^\eps C_1(\eps) (S \delta)^{1 - \eps} | \tilde \TT|. $$

\noindent Now we can inductively apply Theorem \ref{stw} at scale $S \delta$ to get

$$ |P_{ \tilde r} (\tilde \TT)| \lessapprox C_2(\eps) (S \delta)^{-\eps} W^{-1} (N^{-1} S r)^{-2} | \tilde \TT|^2 \lessapprox C_2(\eps) (S \delta)^{-\eps} S^{-2} W^{-1} r^{-2} | \TT|^2. $$

Plugging this into Equation (\ref{PrT}), we get

$$ |P_r(\TT)| \lessapprox C_2(\eps) (S \delta)^{-\eps} W^{-1} r^{-2} | \TT|^2. $$

We claim that this gives the desired bound for $|P_r(\TT)|$ and closes the induction.  To check this, we have to see that $S$ is big enough so that $S^{-\eps}$ dominates the implicit factor in the $\lessapprox$.  This indeed happens, because $S = D^{\eps/10}$ and we chose $D = W \ge \delta^{-\eps/10}$, and so $S$ is at least a small negative power of $\delta$.  
\end{proof}

Remarks. The step of writing $\TT$ as a union of $\TT_R$ in the thin case is the partitioning idea from \cite{CEGSW}.  Indeed we have partitioned the set of tubes $\TT$ into $D^2$ equal sets $\TT_R$, and a ball $q \in P$ can belong to at most $D$ of them.

\section{Proof of Theorem \ref{stwellspaced} and Theorem \ref{gkwellspaced}}

In this section, we prove Theorem \ref{stwellspaced} and Theorem \ref{gkwellspaced}.  It uses the ideas from the last proof, but there is also a new idea needed especially for the 3-dimensional result, Theorem \ref{gkwellspaced}.  In the proof of Theorem \ref{stw}, we used the Kakeya maximal estimate in two dimensions as a base case.  The Kakeya maximal conjecture in higher dimensions is a deep open problem.  Because of this, we cannot prove the direct 3-dimensional generalization of Theorem \ref{stw}, which would concern a set of $W^{-2} \delta^{-2}$ $\delta$-tubes $\TT$ in $[0,1]^3$ with $W^2$ well-spaced tubes in each direction.  But the spacing condition in Theorem \ref{gkwellspaced} is different and more useful.

The following Theorem combines Theorem \ref{stwellspaced} and Theorem \ref{gkwellspaced}.

\begin{theorem}\label{main}  Let $1 \le W \le \delta^{-1}$.  
	Let $\mathbb{T}$ be a collection of tubes of radius $\delta$, length $1$ in $B^n(0,2)$, for $n=2, 3$. If for every distinct $1/W$--tube, there is one $l\in \mathbb{T}$ (hence $|\mathbb{T}|\approx W^{2(n-1)}$),  then  for $r>\max( \delta^{n-1-\epsilon/4} |\mathbb{T}|, 1)$ the number of $r$--rich $\delta$--balls is bounded by
	\begin{equation}
	|P_r(\mathbb{T})|\lesssim \delta^{-\epsilon} \frac{|\mathbb{T}|^{\frac{n}{n-1}}}{r^\frac{n+1}{n-1}}.
	\end{equation}
\end{theorem}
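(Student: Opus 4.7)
The plan is to prove Theorem \ref{main} by induction on the scale $\delta$, handling $n=2$ and $n=3$ uniformly, mirroring the architecture of the proof of Theorem \ref{stw} from the previous section. Two base cases bracket the induction: if $W \le \delta^{-c\eps}$ for a suitable constant $c=c(n)$, the claim follows from a Kakeya-type input (Lemma \ref{kakmax} for $n=2$, and a direct pigeonhole argument in $n=3$ exploiting that the very strong well-spacing limits the number of tubes through any point); if $W \ge \delta^{-1+c\eps}$, the hypothesis $r > \delta^{n-1-\eps/4}|\TT|$ forces $r$ to exceed the trivial upper bound $\delta^{-(n-1)}$ on tube-multiplicity at a single $\delta$-ball, so $P_r(\TT)$ is empty.

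For the inductive step I dyadically pigeonhole so that every $q \in P_r(\TT)$ lies in $\sim r$ tubes, cover $B^n(0,2)$ by $D\delta$-balls $Q$ with $D=W$, and further pigeonhole parameters $M,E$ with $ME \gtrapprox r$ so that the $\delta \times D\delta$ rectangles $\TT_Q$ through $Q$ each lie in $\approx M$ tubes of $\TT$, and $P_Q \subset P \cap Q$ consists of $\delta$-balls meeting $\approx E$ of these rectangles. Rescaling $Q$ to $[0,D]^n$, I apply Proposition \ref{heavyballs} at scale $S = D^{\eps/10}$ to $P_Q$ and $\TT_Q$ and split into a thin and a thick case.

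The thick case proceeds as in Theorem \ref{stw}: the $S\delta$-balls $Q_j$ produced by Proposition \ref{heavyballs} each meet $\gtrapprox S^{n-1} r$ tubes of $\TT$, and after thickening tubes to radius $S\delta$ and pigeonholing the multiplicity $N$, one obtains a new well-spaced configuration $\tilde\TT$ at scale $\tilde\delta = S\delta$ with the same $W$. Checking that the threshold condition $\tilde r \gtrapprox N^{-1} S^{n-1} r$ still satisfies $\tilde r > \tilde\delta^{n-1-\eps/4}|\tilde\TT|$, I apply the induction hypothesis at scale $S\delta$; the $S^\eps$ slack coming from $S = D^{\eps/10}$ absorbs the implicit factors and closes the recursion.

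The main obstacle, and the reason the paper flags a new idea for $n=3$, is the thin case. For $n=2$ it runs exactly as in the proof of Theorem \ref{stw}: partition $\TT$ into $D^2$ subsets $\TT_R$ indexed by $D^{-1}\times 1$ rectangles $R$, rescale, and invoke Lemma \ref{kakmax} on each $\TT_R'$. For $n=3$, the missing $\RR^3$ Kakeya maximal bound is replaced by the very strong spacing of Theorem \ref{main}: partitioning $\TT$ into $W^{2(n-1)}$ cells, one per $W^{-1}$-tube, leaves $O(1)$ tubes in each cell, so the rescaled local problem becomes essentially a one-tube incidence count that can be estimated trivially. Summing these bounds on $|P_Q|$ over the thin $Q$ and combining with $ME \gtrapprox r$, $D=W$, and the partition of $\TT$ should yield the required estimate $|P_r(\TT)| \lesssim \delta^{-\eps}|\TT|^{n/(n-1)}r^{-(n+1)/(n-1)}$.
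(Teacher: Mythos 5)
Your proposal reproduces the base cases and the thick case correctly in outline, but there is a genuine gap in the treatment of the thin case that the paper specifically flags as ``the main new ingredient,'' and your plan misses it. You propose to take $D = W$, exactly as in the proof of Theorem \ref{stw}, and to replace the missing 3D Kakeya input by the observation that each cell $\Box_\tau$ then contains $O(1)$ tubes, so that the rescaled local problem is a trivial one-tube count. The paper does the opposite: it chooses $D \sim \delta^{-\eps^4}$, a \emph{small} power of $\delta^{-1}$, and in the thin case it applies the induction hypothesis (Theorem \ref{main} itself at scale $\tilde\delta = D\delta$, $\tilde W = W/D$, $\tilde r = M$) inside each cell $\Box_\tau$. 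With your choice $D = W$ the rescaled problem has $\tilde W \approx 1$ and $O(1)$ tubes per cell, which forces the multiplicity $M$ down to $O(1)$ and in particular does not rule out $M = 1$; when $M = 1$ the ``trivial'' count $|P_M(\tilde\TT)|$ is not $O(1)$ but $\sim (D\delta)^{-1}$, and substituting this into $\sum_Q |\TT_Q| \lesssim D^{2(n-1)}|P_M(\tilde\TT)|$ costs a factor of $\delta^{-1}$ that the target bound cannot absorb unless $W$ is nearly $\delta^{-1}$. So the thin case does not close with $D = W$.

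The reason the paper can avoid this is that with $D$ chosen small, $M \approx E^{-1} r \ge D^{-(n-1)} r$, so $M \ge 2$ follows once $r$ is large enough. But that lower bound on $r$ is exactly what forces the separate treatment of the small-$r$ regime $r < \delta^{-\eps^3}$, which your plan omits entirely. That regime is handled by a genuinely different argument: pigeonhole the maximal angle $\alpha$ among tubes through a typical cube; if $\alpha$ is small, rescale within $\alpha$-cells and induct; if $\alpha$ is large, either use the trivial bound $|P_r(\TT)| \le \delta^{-n}$ when $W > \delta^{-1/2 + \eps/10n}$, or thicken to $\rho = W^{-2}$-tubes and exploit $X \lesssim \delta^{-O(\eps^3)} M^2$ together with the fact that $\frac{n+1}{n-1} \ge 2$ for $n \le 3$. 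Without this branch and without the choice $D \sim \delta^{-\eps^4}$, the inductive step as you have written it does not go through.
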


\begin{proof}
	We will prove the theorem by induction. There are two base cases for our induction.
	
	The first base case is when $W=O(1)$ and $r>1$. Since the tubes through one point are $1/W\approx O(1)$--separated, we have $|P_r(\mathbb{T})|\lesssim |\mathbb{T}|^2 =O(1)$. 
	
	The second base case is when $W \approx \delta^{-1}$. In this case, $\TT$ is essentially the set of all distinct $\delta$-tubes in $B^n$.  More precisely, the second base case is when $W > \delta^{-1 + \eps/10n}$.  In this case $r\gtrsim \delta^{-\epsilon/4} W^{n-1} > \delta^{-(n-1) - \eps/10}$.  But a $\delta$-cube can lie in at most $\lesssim \delta^{-(n-1)}$ distinct $\delta$-tubes, and so $|P_r(\mathbb{T})|=0$. 

In the inductive argument, we distinguish between the case when $r$ is small and the case when $r$ is large.  The main new ingredient in this proof is a way to handle the small $r$ case.

\subsection{The case $r < \delta^{-\eps^3}$}
	
After dyadic pigeonholing, we can assume that the maximal angle between tubes passing through a typical cube of $P_r(\TT)$ is $\alpha > \delta$.  If $\alpha$ is much smaller than 1, then we cover the sphere $\mathbb{S}^{n-1}$ by caps $\tau$ of radius $\alpha$.  For each $\tau$, 
	we divide the unit cube into cells $\Box_{\tau}$, where each $\Box_{\tau}$ is a thick tube of length $1$ and radius $\alpha$, pointing in the direction defined by $\tau$. The number of $\Box_{\tau}$ is $\alpha^{-2(n-1)}$. 
	Let $\mathbb{T}_{\Box_{\tau}}$ denote the collection of tubes inside $\Box_{\tau}$. Then $|\mathbb{T}_{\Box_{\tau}}|\approx (\alpha W)^{2(n-1)}$. 
	
	We rescale $\Box_{\tau}$ to the unit cube.  The tubes $\TT_{\Box_\tau}$ become $\tilde \TT$.   These tubes $\tilde \TT$ obey the hypotheses of Theorem \ref{main} with 
	$\widetilde{W}=\alpha W$ and  $\widetilde{\delta}=\delta/\alpha$.  The maximal angle between tubes in a typical 2-rich point of $\tilde \TT$ is $\gtrsim 1$.  
	
By induction on $\delta$, 
	
	\begin{equation} \label{transcase} P_2(\tilde \TT) \lesssim \tilde \delta^{-\eps} | \tilde \TT|^{\frac{n}{n-1}} \end{equation}
	 
	 \noindent Each 2-rich $\tilde \delta$-cube of $\tilde \TT$ corresponds to a tube of radius $\delta$ and length $\alpha^{-1} \delta$ in $P_2(\TT_{\Box_\tau})$.  Therefore,
	
	$$ |P_2( \TT)| \lesssim \alpha^{-2(n-1)} \alpha^{-1} |P_2(\tilde \TT)| \lesssim \tilde \delta^{-\eps} \alpha^{-2n+1} | \tilde \TT|^{\frac{n}{n-1}} \le \delta^{-\eps} \alpha | \TT|^{\frac{n}{n-1}}. $$

\noindent Since $r < \delta^{-\eps^3}$, the induction closes if $\alpha < \delta^{10 \eps^3}$.

Next suppose that $\alpha \ge \delta^{10 \eps^3}$.  Now we have two subcases.  If $W > \delta^{-1/2 + \eps/10n}$, then the trivial bound $|P_r(\TT)| \le \delta^{-n}$ suffices.  Indeed the desired upper bound for $|P_r(\TT)|$ is 

$$ \delta^{-\eps} r^{-\frac{n+1}{n-1}} | \TT|^\frac{n}{n-1} \gtrsim \delta^{-\eps} \delta^{10 \eps^3} W^{2n} \ge \delta^{- n - \eps/2}. $$

So now suppose that $W \le \delta^{-1/2 + \eps/10n}$, which implies that $\delta < W^{-2}$, and indeed that 

$$ \delta \le \delta^{\frac{\eps}{5n}} W^{-2}. $$

\noindent 

Let $\rho = W^{-2}$, and let $\tilde \TT$ be the set of tubes formed by thickening each $\delta$-tube of $\TT$ to a $\rho$-tube.  (Since each $1/W$-tube contains one tube of $\TT$, each $\rho$-tube of $\tilde \TT$ also contains only one tube of $\TT$ and $| \tilde \TT| = | \TT|$.)  Cover $B^n(0,2)$ with $\rho$-cubes, and let $Q_{X,M}$ be the set of $\rho$-cubes containing $\sim X$ cubes of $P_r(\TT)$ and intersecting $\sim M$ tubes of $\TT$.  We can choose $X, M$ so that $\cup_{Q \in Q_{X,M}} Q$ contains a $\gtrapprox 1$ fraction of $P_r(\TT)$.  Because $\alpha \ge \delta^{10 \eps^3}$, we have 

$$X\lesssim \delta^{-O(\eps^3)} M^2.$$ 

The number of $M$--rich $\rho$--ball is, by induction on scale, 
	$$P_{M} (\tilde \TT) \lesssim \rho^{-\epsilon} \frac{|\tilde \TT|^\frac{n}{n-1}}{M^{\frac{n+1}{n-1}}}.$$
Therefore,

$$ |P_r(\TT)| \lessapprox \delta^{-O(\eps^3)} M^2 |P_M (\tilde \TT)| \lesssim \rho^{-\eps} M^{2 - \frac{n+1}{n-1}} |\tilde \TT|^{\frac{n}{n-1}}. $$

If $n = 2$ or 3, then $\frac{n+1}{n-1} \ge 2$, and the power of $M$ is $\le 0$.  To close the induction, we need to check that $\delta^{-O(\eps^3)} \rho^{-\eps} \le \delta^{-\eps} r^3$.  Since $r = \delta^{- O(\eps^3)}$ it suffices to check that $ \rho^{-\eps} \le \delta^{-\eps} \delta^{O(\eps^3)}$.  But $\rho / \delta \ge \delta^{-\eps/5n}$, and so this is true.  This finishes the induction in the small $r$ case.  

\subsection{The case $r \ge \delta^{-\eps^3}$}

Now we turn to the induction in the large $r$ case.  The rest of the proof is parallel to Theorem~\ref{stw}.  

Let $1 \le D \le W$ be a parameter.  In this proof, we will eventually choose $D$ to be a small power of $\delta$.  

We cover the unit square with $D \delta$-cubes $Q$.   We let $P$ be the set of $-r$ rich cubes in $P_r(\TT)$, and we can assume by induction on $r$ that $|P| \sim |P_r(\TT)|$.  We pigeonhole as in the proof of Theorem \ref{stw}: $\TT_Q$ is a set of tubes in $Q$ of length $D \delta$ and radius $\delta$, which each belong to $\approx M$ tubes of $\TT$, and $P_Q \subset P_{\sim r}(\TT) \cap Q$ is a set of cubes which each belong to $\approx E$ tubes of $\TT_Q$, where

$$ M E \approx r, \textrm{ and }$$

$$ \sum_Q |P_Q| \approx |P|, \textrm{ and }$$

$$ 1 \le E \le D^{n-1}. $$

Next we apply Proposition \ref{heavyballs} to bound each $|P_Q|$.  We set the scale $S$ to be $D^{\eps/10n}$.  For each $Q$, we will be in either the thin case or the thick case.

In the thin case, we have

$$ |P_r(\TT)| \lessapprox \sum_{Q \textrm{ thin}} |P_Q| \lessapprox \sum_Q S^n E^{-2} D^{n-1} | \TT_Q| \sim E^{-2} D^{n-1 + \eps/10}  \sum_Q | \TT_Q|. $$

To estimate $\sum_Q | \TT_Q|$, we cover the sphere $\mathbb{S}^{n-1}$ by caps $\tau$ of radius $1/D$.  For each $\tau$, 
we divide the unit cube into cells $\Box_{\tau}$, where each $\Box_{\tau}$ is a thick tube of length $1$ and radius $1/D$, pointing in the direction defined by $\tau$. The number of cells $\Box_{\tau}$ is $D^{2(n-1)}$. 
A tube $S$ in $\TT_Q$ lies in $M$ different tubes of $\TT$, and they must all lie in the same cell $\Box_\tau$.  

Let $\TT_{\Box_\tau}$ be the set of $T \in \TT$ contained in $\Box_\tau$.  Rescale $\Box_\tau$ to the unit cube, and let $\tilde \TT = \tilde \TT(\Box_\tau)$ be the resulting set of tubes.  For each $\Box_\tau$, $\tilde \TT$ obeys the hypotheses of this theorem with 
	
	\begin{enumerate}
		\item $\widetilde{\delta}=D\delta$. 
		\item $\widetilde{r}=M \approx E^{-1} r $,
		\item $\widetilde{W}=W/D$, 
	\end{enumerate}

We have 

$$ \sum_Q | \TT_Q | \lesssim D^{2(n-1)} | P_M (\tilde \TT)|. $$

We will apply induction to bound $|P_M(\tilde \TT)|$.  Before we can apply the theorem, we have to verify that $M$ is sufficiently large:  $M > \tilde \delta^{n-1 -\eps/4} | \tilde \TT|$ and $M \ge 2$.   We check the first bound on $M$ by calculation:

$$ M \gtrapprox E^{-1} r > E^{-1} \delta^{n-1 - \eps/4} | \TT| \sim E^{-1} D^{n-1 + \eps/4} \tilde \delta^{n-1-\eps/4} | \tilde \TT| > \tilde \delta^{n-1-\eps/4} | \tilde \TT|. $$

\noindent (The last inequality is because $E \le D^{n-1}$.)  To check $M \ge 2$, we recall that $r$ is big, and we choose $D$ small.  Recall that we are in the case $r > \delta^{-\eps^3}$.  We set $D \sim \delta^{-\eps^4}$, and then $M \approx E^{-1} r \ge D^{-(n-1)} r$ and so $M \ge 2$.  We have to deal with the small $r$ case separately because of this step of the argument.

We have now confirmed that $M$ is sufficiently large, and we can apply induction, giving:

$$  \sum_Q | \TT_Q| \lessapprox (D \delta)^{-\eps} D^{2 (n-1)} M^{- \frac{n+1}{n-1}} ( D^{-2(n-1)} | \TT|)^{\frac{n}{n-1}} \sim (D \delta)^{-\eps} D^{-2} M^{-\frac{n+1}{n-1}} | \TT|^{\frac{n}{n-1}}. $$

Hence

$$ |P_r(\TT)| \lessapprox \delta^{-\eps} D^{n-1 - 2 -\eps/2} E^{-2} M^{-\frac{n+1}{n-1}} |\TT|^{\frac{n}{n-1}}. $$

We now check that this closes the induction.  If $n=3$, the right-hand side is 

$$ D^{-\eps/2} \delta^{-\eps} (ME)^{-2} |\TT|^{3/2} \approx D^{-\eps/2} \delta^{-\eps} r^{-2} | \TT|^{3/2}.$$ 

If $n=2$, the right-hand side is

$$ D^{-\eps/2} \delta^{-\eps} D^{-1} E^{-2} M^{-3} |\TT|^2 \le D^{-\eps/2} \delta^{-\eps} E^{-3} M^{-3} |\TT|^2 \approx D^{-\eps/2} \delta^{-\eps} r^{-3} | \TT|^2.$$

In the thick case, we have a set of $S\delta$ cubes $\tilde P$ so that $\tilde P$ covers a fraction $\gtrapprox 1$ of $P$, and each cube of $\tilde P$ intersects $\gtrapprox S^{n-1} r$ tubes of $\TT$.  Let $\tilde \TT$ be the set of tubes formed by thickening each $\delta$ tube of $\TT$ to a $\rho$-tube.  We see that $\tilde P \subset P_{\tilde r}(\tilde \TT)$ for $\tilde r \gtrapprox S^{n-1} r$.  The tubes $\tilde \TT$ obey the hypotheses of our theorem with $\tilde \delta = S \delta$, $\tilde W = W$, and $|\tilde \TT| = | \TT|$.  (We just have to check that $W \le S\delta^{-1}$.  But by the second base case, we know that $W \le \delta^{-1+\eps/10n}$ and we chose $S \le D = \delta^{-\eps^4}$, so this holds.)

Therefore,

$$ |P_r(\TT)| \lessapprox S^n | \tilde P | \le S^n | P_{\tilde r} (\tilde \TT)| \lessapprox (S \delta)^{-\eps} S^n (S^{n-1} r)^{-\frac{n+1}{n-1}} | \TT|^{\frac{n}{n-1}} \le S^{-1} \delta^{-\eps} r^{-\frac{n+1}{n-1}} | \TT|^{\frac{n}{n-1}}. $$

This closes the induction in the thick case and finishes the proof. 	
	
\end{proof}

Remark.  The statement of Theorem \ref{main} makes sense for all dimensions $n$.  We don't know any counterexamples, and it seems plausible to us that it is true for all $n$.  In our proof, we used $n \le 3$ in the calculation in several places.  

There is an example in the appendix of \cite{GK15} showing that Theorem~\ref{main} is sharp and leads to the conjectured statement for all dimensions $n$. 
We modify the example slightly to accommodate the $\delta$--tube version. 
Let $G$ be the grid $ (\mathbb{Z}/W)^{n-1}  \cap [0,1]^{n-1}$. 
If $a, b \in G$, we take the lines in $\mathcal{L}$ to be the line from $(a,0)$ to $(b,1)$. 

And we take the tubes in $\mathbb{T}$ to be the $\delta$--neighborhood of line segments from $(a, 0)$ to $(b, 1)$. For any pair of tubes $l_1, l_2\in \mathbb{T}$, either they have distance $1/W$ or their angle is $1/W$--separated. So we have verified the assumption of Theorem~\ref{main} for  the tubes in $\mathbb{T}$. 

The calculations in the appendix of \cite{GK15} showed that $$P_r(\mathcal{L})\gtrsim \frac{|\mathcal{L}|^{\frac{n}{n-1}}}{r^{\frac{n+1}{n-1}}}.$$ 

An $r$--rich point $x$ is in the form 
\begin{equation}\label{r rich}
x=(\frac{q-p}{q} a +\frac{p}{q}b, \frac{p}{q})
\end{equation}
where $q/10\leq p< q$ are co-prime positive integers with $q\sim W r^{-\frac{1}{n-1}}$ and the first $n-1$ coordinates have value in $[1/4, 3/4]$. 

Given $x$ and $x'$ in the form of equation~\ref{r rich}, if $x\neq x'$ then $|x-x'|\gtrsim 1/W.$
The reason is the following: if $x_n\neq x_n'$, then $|x-x'|\geq |x_n-x_n'|\geq W^{-1} r^{\frac{1}{n-1}}$; otherwise $x_n=x_n'$ we have either $a\neq a'$ or $b\neq b'$. Both cases are essentially the same because $p/q\approx (q-p)/q\approx 1$ and any two distinct elements in $G$ are $1/W$--separated. Hence, in both cases $|x-x'|\gtrsim 1/W$. 

Now we have showed that the points in $P_r (\mathcal{L})$ are $1/W$--separated. Since $\delta\leq 1/W$, we can thicken the points in $P_{r}(\mathcal{L})$ and they become disjoint $r$--rich $\delta$--balls in $P_r(\mathbb{T})$.

%
%

\section{An application to the Falconer problem}
In this section, we consider a distinct distances type problem for $\delta$-balls in $\RR^2$, which is related to the Falconer distance problem in $\RR^2$.   As we mentioned in the introduction,  Orponen\cite{O17} and Keleti-Shmerkin\cite{Shm17A}\cite{Shm17B}\cite{KS18} essentially solved the Falconer distance problem for sets that are close to Ahlfors-David regular.  Here we consider the opposite type of set -- Ahlfors-David regular sets of a given dimension are packed as tightly as possible, and we consider here sets that are as spread out as possible.  

If $E$ is a set in the plane, recall that $\Delta(E)$ is the distance set

$$\Delta(E)=\{ |x-y|, x, y \in E\},$$  where $|x-y|$ denote the Euclidean distance between two points $x$ and $y$.

\begin{theorem}\label{tent} Fix $1 < s < 2$.  Suppose that $E$ is a set of $\delta^{-s}$ $\delta$-balls in $[0,1]^2$, with $\lesssim 1$ $\delta$-ball in each ball of radius $\delta^{s/2}$.  Then the number of disjoint $\delta$-intervals contained in $\Delta(E)$ is $\gtrsim_\eps \delta^{-1 + \eps}$ for all $\eps> 0$.
\end{theorem}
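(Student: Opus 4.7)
The plan is to run the Elekes--Sharir mechanism that was used to derive Theorem~\ref{distdistwellspaced} from Theorem~\ref{gkwellspaced}, re-doing the numerology for exponent $s \in (1,2)$.

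Let $N = |E| = \delta^{-s}$, and let $K$ denote the number of disjoint $\delta$-intervals meeting $\Delta(E)$. For each such interval indexed by $t$, write $n_t$ for the number of ordered pairs $(x,y) \in E \times E$ with $|x-y|$ in that interval. Since $\sum_t n_t = N^2$ is supported on $\le K$ values of $t$, Cauchy--Schwarz gives
$$ \sum_t n_t^2 \;\gtrsim\; N^4/K \;=\; \delta^{-4s}/K. $$
Now use the Elekes--Sharir coordinates on the three-dimensional group of orientation-preserving isometries of $\RR^2$, in which for each ordered pair $(x,y)$ the set of isometries sending $x \mapsto y$ is a genuine line $\ell_{x,y}$ in a bounded region of $\RR^3$. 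Let $T_{x,y}$ be its $\delta$-neighborhood and $\TT = \{T_{x,y} : x,y \in E\}$, so $|\TT| \sim N^2 = \delta^{-2s}$. Because $(x,y) \mapsto \ell_{x,y}$ is bi-Lipschitz in these coordinates, the hypothesis ``at most $O(1)$ balls of $E$ in each $\delta^{s/2}$-ball'' translates into the well-spacing hypothesis of Theorem~\ref{gkwellspaced} at scale $W = \delta^{-s/2}$, and $|\TT|$ has the right scaling $W^{2(n-1)}$ for $n=3$. Each non-degenerate ordered quadruple $(x_1,x_2,y_1,y_2)$ contributing to $\sum_t n_t^2$ corresponds to a near-incidence of $T_{x_1,y_1}$ with $T_{x_2,y_2}$ at a single $\delta$-ball $q$, and hence
$$ \sum_t n_t^2 \;\lesssim\; \sum_q r(q)^2 \;\sim\; \sum_{r \textrm{ dyadic}} r^2 |P_r(\TT)|. $$

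Now estimate $\sum r^2 |P_r(\TT)|$ via Theorem~\ref{gkwellspaced}. Set $r_0 = \delta^{2-\eps}|\TT| = \delta^{2-2s-\eps}$. For $r > r_0$ the theorem yields $|P_r(\TT)| \lesssim \delta^{-\eps} r^{-2} |\TT|^{3/2}$, so the dyadic sum for $r > r_0$ contributes $\lessapprox \delta^{-\eps}|\TT|^{3/2} = \delta^{-\eps - 3s}$. For $r \le r_0$ use the trivial bound
$$ \sum_{r \le r_0} r^2 |P_r| \;\le\; r_0 \sum_r r |P_r| \;\le\; r_0 \cdot \delta^{-1}|\TT| \;=\; \delta^{1-4s-\eps}, $$
since each $\delta$-tube of length $1$ in $[0,1]^3$ meets only $O(\delta^{-1})$ lattice $\delta$-balls. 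For $s > 1$ we have $\delta^{1-4s} > \delta^{-3s}$, so the small-$r$ contribution dominates, and $\sum r^2 |P_r(\TT)| \lessapprox \delta^{1-4s-\eps}$. Combining with the two displays above yields $\delta^{-4s}/K \lessapprox \delta^{1-4s-\eps}$, i.e.\ $K \gtrsim_\eps \delta^{-1+\eps}$.

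The main obstacle I anticipate is the careful bookkeeping of the Elekes--Sharir step: choosing the parametrization so that $\ell_{x,y}$ is an actual line, and verifying that $(x,y) \mapsto \ell_{x,y}$ is bi-Lipschitz with sharp enough constants that $\delta^{s/2}$-separation of $E$ yields the $W^{-1}$-tube well-spacing required by Theorem~\ref{gkwellspaced}. One must also discard degenerate quadruples (e.g.\ $x_1 = x_2$, or where the realizing isometry is not isolated in $SE(2)$), but these contribute at most $O(N^3)$, subordinate to $N^4/K$ for the targeted $K$.
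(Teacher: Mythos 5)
Your proposal follows essentially the same route as the paper: convert the quadruple count into incidences of $\delta$-tubes in $\RR^3$ via Elekes--Sharir, verify the well-spacing hypothesis of Theorem \ref{gkwellspaced} at $W = \delta^{-s/2}$, and split the dyadic sum $\sum_r r^2 |P_r(\TT)|$ at $r_0 \approx \delta^{2-\eps}|\TT|$, using Theorem \ref{gkwellspaced} above $r_0$ and a trivial incidence count below; your numerology $\sum_r r^2 |P_r(\TT)| \lessapprox \delta^{1-4s-\eps}$ and the conclusion $K \gtrsim_\eps \delta^{-1+\eps}$ match the paper's. Two small points worth noting: the paper first restricts to two separated pieces $E_1, E_2 \subset E$ so that all relevant distances are $\approx 1$, which is what makes the bi-Lipschitz claim clean and the tubes transversal to the plane $\{z = 0\}$ (Lemmas \ref{tube} and \ref{ball}) --- your ``discard degenerate quadruples'' remark covers the same issue more informally, and your check that the $O(N^3)$ degenerate contribution is subordinate for $s > 1$ is correct; and there is a small index slip --- a quadruple $(x_1, y_1, x_2, y_2)$ with $|x_1 - y_1| \approx |x_2 - y_2|$ produces a near-incidence of $\ell_{x_1, x_2}$ with $\ell_{y_1, y_2}$ (or, in the paper's $E_1 \times E_2$ notation, of $\ell_{p_1, q_2}$ with $\ell_{q_1, p_2}$), not of $T_{x_1, y_1}$ with $T_{x_2, y_2}$, though this doesn't affect the count.
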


We let $\#\Delta(E)$ denote the number of disjoint $\delta$--intervals contained in $\Delta(E)$. 

We can choose  two balls  $B_1$ and $B_2$ of radius $1/10$ and with centers about $1/3$ part such that  each $E_j = E\cap B_j$ contains about $1/100$ of $E$.  It suffices to show that  $\Delta(E_1, E_2)=\{|x-y|, x\in E_1, y\in E_2\}$ contains $\gtrsim \delta^{-1+\epsilon}$ many $\delta$--intervals.

We recall the Elekes-Sharir framework\cite{ES}, which was used in the Erd\H{o}s distinct distance problem\cite{GK15}. 

If $|x_1-y_1|=|x_2-y_2|$ for points $x_1, x_2\in E_1$ and $y_1, y_2\in E_2$, then there exists a unique (orientation-preserving) rigid motion $g$ on the plane  sending $x_1$ to $y_2$ and $y_1$ to $x_2$. 
A rigid motion $g =(c,\theta)$ is uniquely determined by the center $c\in \mathbb{R}^2$ and the rotation angle $\theta$. We could represent $g$ by a point $\rho(g)=(c, \cot \frac{\theta}{2})$ in $\mathbb{R}^3$. Let $g_{xy}$ denote the collection of rigid motions sending a point $x$ to $y$. Then  $\rho(g_{xy})$  a line in $\mathbb{R}^3$:
\begin{equation}\label{ES tube}
l_{xy}=\rho(g_{xy}) = (\frac{x_1+y_1}{2},\frac{x_2+y_2}{2},  0) + t(-\frac{y_2-x_2}{2}, \frac{y_1-x_1}{2}, 1).
\end{equation}
  In particular, the centers of those $g$ lie on the perpendicular bisector of $x$ and $y$. We can also read the coordinates of $x$ and $y$ from the parameterized equation of $l_{x,y}$. If a line $l$ is parametrized by $$l=\{ ( c_1, c_2, 0) +t(k_1, k_2, 1)\},$$
  then there exists $x=(x_1, x_2)$ and $y=(y_1, y_2)$ such that $l_{xy}=l$. To find $x, y$, it suffices to solve the linear equations system:
  $$ x_1+y_1=2c_1, x_2+y_2=2c_2, y_2-x_2=-2k_1, y_1-x_1=2k_2.$$
  Hence, a line in $\mathbb{R}^3$ one-by-one corresponds to a pair of points in $\mathbb{R}^2$. 

Above is the discrete version of the  Elekes-Sharir framework.  In order to treat the $\delta$--thickening variation, we need a few notations. Let $p$ be a $\delta$--ball in $E_1$ and $q$ be a $\delta$--ball in $E_2$. We say that a rigid motion $g$ sends $p$ to $q$ if $g(p)\cap q\neq\emptyset$.
\begin{lemma}\label{tube}
	Assume that  $dist(p,q)\approx 1$, and  $B$ is a ball of radius about $1$ containing $p$ and $q$. Let $g_{p,q, B}$ denote the collection of rigid motions sending $p$ to $q$ with centers in $B$. Then $\rho(g_{p,q, B})$ is approximately a tube $l_{p,q}$ of radius $\delta$, length about 1. 
\end{lemma}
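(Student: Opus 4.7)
The plan is to establish a bi-Lipschitz correspondence between the parameter $(x,y) \in p \times q$ and the line $l_{xy} \subset \RR^3$, restricted to the portion where the center $c$ lies in $B$, and from this to derive both containment directions of the ``approximately a tube'' claim.

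First I would read off from the explicit parametrization
\[
l_{xy} = \left(\tfrac{x_1+y_1}{2}, \tfrac{x_2+y_2}{2}, 0\right) + t\left(-\tfrac{y_2-x_2}{2}, \tfrac{y_1-x_1}{2}, 1\right)
\]
that both the base point and the direction vector are affine in $(x,y)$ with derivative of size $O(1)$. The hypothesis $\mathrm{dist}(p,q) \approx 1$ means the first two coordinates of the direction vector have magnitude $\approx 1/2$, so the set of $t$ for which $c \in B$ is an interval of length $\approx 1$. The corresponding segment of $l_{xy}$ therefore has length $\approx 1$ in $\RR^3$, and a perturbation $|x - x'|, |y - y'| \le \delta$ shifts the whole segment by at most $O(\delta)$ in the Hausdorff sense. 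This immediately gives the $\subset$ direction: if $g \in g_{p,q,B}$ with $g(x) = y$ for some $x \in p, y \in q$, then $\rho(g) \in l_{xy}$, and $l_{xy}$ lies in an $O(\delta)$-neighborhood of the reference line $l_{p,q} := l_{x_p, y_q}$, where $x_p, y_q$ are the centers of $p, q$.

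For the reverse $\supset$ direction, given $\rho(g) = (c, \cot(\theta/2))$ in a $\delta$-tube around the segment of $l_{p,q}$ with $c \in B$, I would pick a nearest point $\rho(g_0)$ on $l_{p,q}$, so that $|c - c_0|$ and $|\cot(\theta/2) - \cot(\theta_0/2)|$ are both $O(\delta)$. Because $t = \cot(\theta/2)$ stays in a bounded interval on the segment, $\theta$ stays bounded away from $0$ and $2\pi$, so $\cot(\cdot/2)$ is bi-Lipschitz there and $|\theta - \theta_0| = O(\delta)$. A direct expansion of $g(z) = c + R_\theta(z - c)$ against $g_0(z) = c_0 + R_{\theta_0}(z - c_0)$ at $z = x_p$ gives $|g(x_p) - g_0(x_p)| = O(\delta)$. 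Since $g_0(x_p) = y_q$, the motion $g$ sends $x_p \in p$ to a point $O(\delta)$-close to $y_q \in q$, so $g \in g_{p,q,B}$ after absorbing the constant into the meaning of ``approximately.''

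The main technical point is the calibration in the reverse direction: one must verify that on the portion of $l_{p,q}$ with $c \in B$, the angle $\theta$ is uniformly bounded away from $0$ and $2\pi$, so that $\cot(\theta/2)$ is a bi-Lipschitz coordinate. This is where $\mathrm{dist}(p,q) \approx 1$ is used essentially: if $|x - y|$ were much smaller, the direction vector in the $(c_1, c_2)$ plane would be short, the $t$-range giving $c \in B$ would blow up, and the comparison between the center-angle parameters of $g$ and $g_0$ would break down.
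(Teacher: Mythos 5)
Your forward-containment argument (that $\rho(g_{p,q,B})$ lies inside an $O(\delta)$-tube around $l_{p,q}$) is essentially the paper's: read off from the parametrization that both the base point and the direction vector move by $O(\delta)$ as $(x,y)$ ranges over $p\times q$, and use $\mathrm{dist}(p,q)\approx 1$ to control the inclination angle and keep the relevant $t$-interval of length $\approx 1$. The paper's proof stops there, letting the word ``approximately'' absorb the reverse containment. Your reverse direction is a genuine addition: picking a nearest $\rho(g_0)\in l_{p,q}$, converting $|t-t_0|=O(\delta)$ into $|\theta-\theta_0|=O(\delta)$ via the bi-Lipschitz property of $\cot(\cdot/2)$ on a bounded $t$-range (this is precisely where $\mathrm{dist}(p,q)\approx 1$ is essential, since otherwise $t$ is unbounded on the portion of the line with $c\in B$), and then expanding $g(x_p)$ against $g_0(x_p)=y_q$ to get $|g(x_p)-y_q|=O(\delta)$. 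This makes explicit why $\rho(g_{p,q,B})$ is not merely contained in but \emph{comparable to} a $\delta$-tube, which is the sense in which the lemma is actually used when these tubes are fed into the incidence theorem. Both halves of your argument are correct.
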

\begin{proof}
	Let $x$ be the center of $p$ and $y$ be the center of $q$, then $l_{p,q, B}$ lies inside $$\{ (\frac{x_1+y_1}{2}+O(\delta), \frac{x_2+y_2}{2}+O(\delta), 0) + t( -\frac{y_2-x_2}{2} +O(\delta), \frac{y_1-x_1}{2}+O(\delta), 1) \}.$$
	
	By equation~\ref{ES tube}, the angle between $l_{xy}$ and the $\{z=0\}$--plane is $\arctan(\frac{|x-y|}{2})$. Since $dist(p,q)\approx 1$, when $x'$ and $y'$ move within $p, q$, the angle $\arctan(\frac{|x'-y'|}{2})$ is about $45$ degrees and moves about $\delta$. So $l_{p, q, B}$ is about a tube of radius $\delta$ and length $O(1)$.   
\end{proof}

If $p_1, p_2$ are two $\delta$--balls in $E_1$ and $q_1, q_2$ are two $\delta$--balls  in $E_2$ such that $$|dist(p_1,q_1)-dist(p_2,q_2)|< \delta,$$ then there exists a rigid motion $g$ sending $p_1$ to $q_2$ and $q_1$ to $p_2$. Moreover, $\rho$  maps the set of such rigid motions to  a $\delta$--ball in $\mathbb{R}^3$. 

\begin{lemma}\label{ball}
	Suppose that $p_j, q_j$ are disjoint $\delta$--balls satisfying:  $dist(p_1, p_2)\leq 1/10$, $dist(q_1, q_2)\leq 1/10$, $dist(p_1, q_1)\geq 1/3$ and 
	$$|dist(p_1, q_1)-dist(p_2,q_2)|<\delta.$$
	Then $\rho(g_{p_1, q_2}\cap g_{q_1, p_2})$ is roughly a ball of radius $\delta$.  (Here we do not distinguish a shape $\Omega$ with a ball of radius $\delta$ if $\Omega$ contains a ball of radius $O(\delta)$ and is contained in a ball of radius $O(\delta)$.)
\end{lemma}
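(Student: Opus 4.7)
The plan is to view the set $\rho(g_{p_1,q_2} \cap g_{q_1,p_2})$ as the intersection of two thin tubes in $\mathbb{R}^3$, apply Lemma~\ref{tube} to each one, and then verify that the two central axes are transversal and meet (up to error $O(\delta)$). Concretely, I would first observe that $\rho(g_{p_1,q_2}\cap g_{q_1,p_2}) = \rho(g_{p_1,q_2})\cap \rho(g_{q_1,p_2})$, and that by Lemma~\ref{tube} each factor is essentially a $\delta$-tube about the Elekes--Sharir lines $l_{x_1,y_2}$ and $l_{y_1,x_2}$, where $x_j$ (resp.\ $y_j$) denotes the center of $p_j$ (resp.\ $q_j$); the hypotheses $\mathrm{dist}(p_1,q_1)\ge 1/3$ and that everything lies in a region of diameter $\lesssim 1$ put us exactly in the regime where Lemma~\ref{tube} applies.

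Next I would compute the angle between the two central lines. Using the parametrization in equation~(\ref{ES tube}), the $xy$-projections of the direction vectors are $\tfrac12 R(y_2-x_1)$ and $\tfrac12 R(x_2-y_1)$, where $R$ is the $90^\circ$ rotation. Their difference is $\tfrac12 R\bigl((y_1+y_2)-(x_1+x_2)\bigr)$. Since $|x_1-x_2|\le 1/10$, $|y_1-y_2|\le 1/10$ and $|x_1-y_1|\ge 1/3$, this difference has length $\gtrsim 1$. Because the $z$-components of both direction vectors equal $1$, this forces the angle between $l_{x_1,y_2}$ and $l_{y_1,x_2}$ to be bounded below by a positive constant.

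The remaining point is that the two lines actually meet up to error $O(\delta)$. This is where the distance hypothesis $\bigl|\mathrm{dist}(p_1,q_1)-\mathrm{dist}(p_2,q_2)\bigr|<\delta$ does the work: the two exact lines $l_{x_1,y_2}$ and $l_{y_1,x_2}$ intersect in a point iff there is a rigid motion sending $x_1\mapsto y_2$ and $y_1\mapsto x_2$, which happens iff $|x_1-y_1|=|y_2-x_2|$. Our hypothesis gives this equality up to $\delta$, so perturbing one of the centers (say $y_2$) by $O(\delta)$ produces lines that meet exactly; since Lemma~\ref{tube} shows $l_{x_1,y_2}$ is stable under $O(\delta)$ perturbations of its endpoints, the two original lines lie within Hausdorff distance $O(\delta)$ of each other. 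Combining transversality with angle $\gtrsim 1$ and proximity $O(\delta)$, the intersection of the two $\delta$-tubes is contained in a ball of radius $O(\delta)$ and contains one too, which is the desired conclusion.

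The main obstacle I anticipate is purely bookkeeping: making precise that the ``fuzzy'' sets $\rho(g_{p,q})$ from Lemma~\ref{tube} (which are not exactly tubes but are comparable to tubes) intersect in something comparable to a $\delta$-ball. Once the geometric estimates on angle and on inter-line distance are in hand, this reduces to the standard fact that two transversal $\delta$-tubes whose axes are within $O(\delta)$ of meeting intersect in a region of diameter $O(\delta)$, so the argument should close cleanly.
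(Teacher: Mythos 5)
Your proposal is correct and follows essentially the same route as the paper: interpret $\rho(g_{p_1,q_2})$ and $\rho(g_{q_1,p_2})$ as $\delta$-tubes via Lemma~\ref{tube}, verify that their axes are transversal with angle $\gtrsim 1$, and conclude that the intersection is a $\delta$-ball. The only cosmetic difference is that you compute the angle directly from the four centers via the planar projections of the direction vectors, whereas the paper computes the cross product for $l_{x,y}$ and $l_{y,x}$ (with $x,y$ the centers of $p_1,q_2$) and then perturbs using $\mathrm{dist}(p_1,p_2),\ \mathrm{dist}(q_1,q_2)\le 1/10$; you are also a bit more explicit that the distance hypothesis is what guarantees the axes come within $O(\delta)$ of meeting, a point the paper folds into the discussion preceding the lemma.
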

\begin{proof}
	Let $B_1$ be a ball of radius $1/10$ containing $p_1$ and $p_2$, and $B_2$ be another ball of radius $1/10$ containing $q_1$ and $q_2$. Then $dist(B_1, B_2)\geq 1/10$. 
	
	If a rigid motion $g$ sends $p_1$ to $q_2$ and $q_1$ to $p_2$, then $g$  is roughly a reflection between $B_1$ and $B_2$: $g(B_1)\cap B_2\neq \emptyset$ and $g(B_2) \cap B_1 \neq \emptyset$. So the center of $g$ must lie in a  ball $B_3$ of radius $1/5$ with center in the midpoint of $p_1$ and $q_1$. 
	
	Let $B= 20B_3$. By Lemma~\ref{tube}, $\rho$ maps the collection of rigid motions sending $p_1$ to $q_2$ with centers in $B$ to a tube $l_{p_1, q_2, B}$ of radius $\delta$.

	Now we would like to understand how $l_{p_1, q_2, B}$ and $l_{q_1, p_2, B}$ intersect. 
	If $x, y$ are centers of $p_1$ and $q_2$, then $l_{x,y}$ and $l_{y,x}$ intersects transversely because 
$$ |(-\frac{y_2-x_2}{2}, \frac{y_1-x_1}{2}, 1) \times (-\frac{x_2-y_2}{2}, \frac{x_1-y_1}{2}, 1)|\gtrsim 1.$$
Since  $p_2$ is not too far away from $p_1$, and $q_1$ is not too far away from $q_2$, two tubes $l_{p_1, q_2, B}$ and $l_{q_1, p_2, B}$ intersect transversely.

\end{proof}

From the proof of Lemma~\ref{ball}, we can restrict to the rigid motions with center in $[0,1]^2$. And for any pair of $\delta$--balls $p\in E_1$ and $q\in E_2$, the collection of interesting rigid motions sending $p$ to $q$ is roughly a tube $l_{p,q}$  of radius $\delta$ inside $[0,1]^3$.

 To prove Theorem~\ref{tent}, it suffices to bound the number of distance quadruples.
 \begin{prop}\label{quadruple} Let $E$ be as in Theorem \ref{tent}.  Set $W = \delta^{-s/2}$, so that $E$ contains $\lesssim 1$ $\delta$-ball in each $1/W$-ball in $[0,1]^2$.   
 	Let $Q$ denote the collection of distance quadruples $(p_1, p_2, q_1, q_2)$ such that $$| dist(p_1, q_1)-dist(p_2, q_2)|< \delta,$$ for any $\delta$-balls   $p_1, p_2\in E_1, q_1, q_2\in E_2.$
 	Then $$\#Q\leq W^{8}\delta^{1-\epsilon}.$$
 \end{prop}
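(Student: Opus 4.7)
The plan is to recast $\#Q$ as an incidence count for a family of tubes in $\mathbb{R}^3$ and then invoke Theorem~\ref{main} with $n=3$. For each ordered pair $(p,q)$ with $p \in E_1$ and $q \in E_2$, Lemma~\ref{tube} produces a $\delta \times 1$ tube $l_{p,q} \subset [0,1]^3$. Define $\mathbb{T}$ to be the union of the tubes $l_{p,q}$ and $l_{q,p}$ as $(p,q)$ ranges over $E_1 \times E_2$, so that $|\mathbb{T}| \sim |E_1|\,|E_2| \sim W^4$. Given a quadruple $(p_1,p_2,q_1,q_2) \in Q$, there is a rigid motion $g$ sending $p_1$ to $q_2$ and $q_1$ to $p_2$, and by Lemma~\ref{ball} the image $\rho(g)$ lies in a $\delta$-ball contained in $l_{p_1,q_2} \cap l_{q_1,p_2}$. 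Conversely, each $\delta$-ball lying in the intersection of two tubes of $\mathbb{T}$ accounts for $O(1)$ such quadruples, so
$$\#Q \lesssim \sum_r r^2 \, |P_r(\mathbb{T})|.$$

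Next I verify the spacing hypothesis of Theorem~\ref{main} for $\mathbb{T}$. From the explicit parametrization in equation~\ref{ES tube}, if $l_{p,q}$ and $l_{p',q'}$ both lie in a common tube of radius $1/W$ and length $1$ in $[0,1]^3$, then the base points must satisfy $|p+q - p'-q'| \lesssim 1/W$ and the directions must satisfy $|(q-p) - (q'-p')| \lesssim 1/W$, so that $|p-p'|, |q-q'| \lesssim 1/W$. A cross-type near-coincidence between a tube from $E_1 \times E_2$ and one from $E_2 \times E_1$ is impossible once $W$ is large, since $E_1$ and $E_2$ lie in disjoint balls at distance $\geq 1/10$; and within a single type the well-spacing hypothesis $|E \cap B(\,\cdot\,, 1/W)| \lesssim 1$ forces $(p,q) = (p',q')$. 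Hence each $1/W$-tube contains $\lesssim 1$ element of $\mathbb{T}$, and Theorem~\ref{main} applies with $n = 3$.

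Set the threshold $r_* := C(\epsilon)\,\delta^{2-\epsilon/4} |\mathbb{T}| = C(\epsilon)\,\delta^{2-\epsilon/4} W^4$. For $r > r_*$, Theorem~\ref{main} gives $|P_r(\mathbb{T})| \lesssim \delta^{-\epsilon} W^6/r^2$, so summing over $O(\log \delta^{-1})$ dyadic scales,
$$\sum_{r > r_*} r^2 \, |P_r(\mathbb{T})| \lesssim \delta^{-2\epsilon} W^6.$$
For $r \leq r_*$, I use only the trivial incidence bound: each tube of $\mathbb{T}$ contains $\lesssim \delta^{-1}$ many $\delta$-balls, so $\sum_r r\,|P_r(\mathbb{T})| \leq I(\mathbb{T}) \lesssim \delta^{-1} W^4$, and hence
$$\sum_{r \leq r_*} r^2 \, |P_r(\mathbb{T})| \leq r_* \cdot \delta^{-1} W^4 \lesssim \delta^{1-\epsilon/4} W^8.$$
Finally, since $s > 1$ gives $W^2 = \delta^{-s} \geq \delta^{-1}$, we have $W^6 \leq \delta W^8$, so
$$\#Q \lesssim \delta^{1-\epsilon/4} W^8 + \delta^{-2\epsilon} W^6 \lesssim W^8 \delta^{1-\epsilon}.$$

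The geometric content is packaged by the Elekes--Sharir framework and Lemmas~\ref{tube}--\ref{ball}, so I expect the main obstacle to be checking that $\mathbb{T}$ really satisfies the well-separation hypothesis of Theorem~\ref{main} in $\mathbb{R}^3$ (in particular ruling out the cross-type coincidences) and verifying that the two regimes balance cleanly. The latter relies crucially on the fact that the trivial contribution from $r \leq r_*$ is already $\delta^{1-\epsilon/4} W^8$, and on the inequality $W^6 \leq \delta W^8$, which is exactly where the hypothesis $s > 1$ enters.
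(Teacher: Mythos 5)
Your proposal is correct and follows essentially the same route as the paper: recast $\#Q$ as $\sum_r r^2|P_r(\mathbb{T})|$ for the Elekes--Sharir tube family in $\mathbb{R}^3$, verify the $1/W$-spacing hypothesis of Theorem~\ref{main} (using the injectivity of $(p,q)\mapsto l_{p,q}$ and the $1/W$-separation of $E$), and then sum over $r$, treating the range $r\lesssim \delta^{2-\eps/4}|\mathbb{T}|$ trivially and the complementary range by Theorem~\ref{main}. The only cosmetic difference is in the trivial regime: you bound $\sum_{r\le r_*} r^2|P_r|$ by $r_*\cdot I(\mathbb{T})\lesssim r_*\delta^{-1}W^4$, whereas the paper uses $|P_r|\lesssim\delta^{-3}$; both give $\lesssim \delta^{1-O(\eps)}W^8$. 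Your explicit verification that cross-type tubes $l_{p,q}$ and $l_{q',p'}$ cannot share a $1/W$-tube (because $B_1$ and $B_2$ are $\gtrsim 1/10$-separated) is a welcome clarification of a point the paper leaves implicit.
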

 Proposition~\ref{quadruple} implies Theorem~\ref{tent} because by Cauchy-Schwartz, 
 $$\#E_1 \cdot \# E_2 \leq \#\Delta(E_1, E_2)^{1/2} \cdot \#Q^{1/2}.$$ Now we turn to the proof of Proposition~\ref{quadruple}.
 
 \begin{proof}

  Let $\mathbb{T}$ denote the collection of $l_{p,q}$ and $l_{q,p}$ for all $p\in E_1$ and $q\in E_2$. A distance quadruple $(p_1, p_2, q_1, q_1)\in Q$ corresponds to the event that  $l_{p_1, q_2}$ and $l_{q_1, p_2}$ intersects at a $\delta$--ball(transversally). 
  
  To prove Proposition~\ref{quadruple}, it suffices to show that  
   the number of $r$--rich $\delta$--balls is bounded by 
  \begin{equation} \label{suffbound} |P_r(\mathbb{T})|\leq \delta^{1-s-\epsilon}\frac{|\mathbb{T}|^{3/2}}{r^2} \end{equation}
   because each $r$--rich $\delta$--ball corresponds to at most $r^2$ distance quadruples
   $$\#Q\leq \sum_{r \text{~~dyadic}} r^2 |P_r(\mathbb{T})|$$
   and we have  $W^2 = \delta^{-s}$.

  	When $r\lesssim \delta^{2-\epsilon/4} |\mathbb{T}| $, the estimate is true because $|P_r(\mathbb{T})|\lesssim  \delta^{-3}$, which is the maximum number of $\delta$--balls  in $[0,1]^3$.  When $r\gtrsim \delta^{2-\epsilon/4} |\mathbb{T}|$,  we want to apply Theorem \ref{main}.  Once we check that $\TT$ obeys the spacing hypotheses in Theorem \ref{main}, the theorem will give the bound \ref{suffbound}.  
  	
	To finish the proof, we check that  tubes in $\mathbb{T}$ have the good spacing property.  We can decompose the sphere $\mathbb{S}^2$ into union of caps $\tau$ of radius $1/W$. For each $\tau$, we can cover $[0,1]^3$ by finitely overlapping tubes of radius $1/W$ pointing on the direction in $\tau$.  Each $\delta$--tube in $\mathbb{T}$ corresponds to a unique pair of $\delta$--balls $(p,q)$. This is essentially the same reason as the one-by-one correspondence between the line $l_{xy}$ and the pair of  points  $(x,y)$. 
	Each $1/W$--tube in $[0,1]^3$ corresponds to a unique pair of $1/W$--squares $(Q_1, Q_2)$, $Q_i \subset [0,1]^2$.  Now the $\delta$-tube of $\TT$ corresponding to $(p,q)$ lies inside the $1/W$ tube corresponding to $(Q_1, Q_2)$ if and only if $p \in Q_1$ and $q \in Q_2$.  Since $E$ contains $\lesssim 1$ $\delta$-balls in any $1/W$-ball, each $1/W$-tube contains $\lesssim 1$ tube of $\TT$.  Moreover, $| \TT | \sim | E|^2 \sim \delta^{-2s} \sim W^4$.  So $\TT$ verifies the spacing hypotheses of Theorem \ref{main}.

\end{proof}

\vskip.25in

\end{document}